\newcommand{\noun}[1]{\textsc{#1}}
\providecommand{\tabularnewline}{\\}
\numberwithin{equation}{section}
\numberwithin{figure}{section}
\numberwithin{table}{section}
\theoremstyle{plain}
\newtheorem{thm}{\protect\theoremname}[section]
  \theoremstyle{remark}
  \newtheorem*{acknowledgement*}{\protect\acknowledgementname}
  \theoremstyle{definition}
  \newtheorem{defn}[thm]{\protect\definitionname}
  \theoremstyle{plain}
  \newtheorem*{thm*}{\protect\theoremname}
  \theoremstyle{plain}
  \newtheorem{prop}[thm]{\protect\propositionname}
  \theoremstyle{plain}
  \newtheorem{lem}[thm]{\protect\lemmaname}
\DeclareMathOperator{\stab}{stab}
\DeclareMathOperator{\im}{im}
\DeclareMathOperator{\sgn}{sgn}
\DeclareMathOperator{\Spec}{Spec}
\DeclareMathOperator{\ord}{ord}
\DeclareMathOperator{\orig}{orig}
\DeclareMathOperator{\term}{term}
\newcommand{\one}{\mathbbm{1}}
\newcommand{\Mod}[1]{\,\left(\textup{mod}\;#1\right)}
\theoremstyle{definition}
\newtheorem*{rems*}{Remarks}
\setlist[enumerate]{leftmargin=*,widest=0}
  \providecommand{\acknowledgementname}{Acknowledgement}
  \providecommand{\definitionname}{Definition}
  \providecommand{\lemmaname}{Lemma}
  \providecommand{\propositionname}{Proposition}
  \providecommand{\theoremname}{Theorem}
\providecommand{\theoremname}{Theorem}
\begin{document}

\title[Ramanujan triangle complexes]{Spectrum and combinatorics of two-dimensional Ramanujan complexes}

\author{Konstantin Golubev and Ori Parzanchevski}
\begin{abstract}
Ramanujan graphs have extremal spectral properties, which imply a
remarkable combinatorial behavior. In this paper we compute the high
dimensional Hodge-Laplace spectrum of Ramanujan triangle complexes,
and show that it implies a combinatorial expansion property, and a
pseudo-randomness result. For this purpose we prove a Cheeger-type
inequality and a mixing lemma of independent interest.
\end{abstract}

\date{\today}

\maketitle

\section{\label{sec:Introduction}Introduction}

Expanders are graphs whose nontrivial adjacency spectrum is concentrated
in a narrow strip. This implies remarkable combinatorial properties,
such as isoperimetric expansion \cite{AM85}, pseudo-randomness \cite{AC88,friedman1987expanding},
rapid convergence of random walk and a large chromatic number \cite{hoffman1970eigenvalues}.
We refer to the surveys \cite{HLW06,Lub12} for the applications of
expanders in mathematics and computer science. 

A $k$-regular graph is called \emph{Ramanujan }if its nontrivial
spectrum is contained within the $L^{2}$-spectrum of its universal
cover, which is the $k$-regular tree $T_{k}$: 
\[
\mathrm{Spec}\left(\mathrm{Adj}_{T_{k}}\right)=\left[-2\sqrt{k-1},2\sqrt{k-1}\right]
\]
(the \emph{trivial} spectrum is, by definition, the eigenvalues $\pm k$).
By the Alon-Boppana theorem (cf.\ \cite[Thm.\ 2.7]{HLW06}), this
is asymptotically the best one can hope for, so that Ramanujan graphs
are optimal expanders. Such graphs were first constructed in \cite{LPS88,margulis1988explicit},
as quotients of the Bruhat-Tits tree associated with $PGL_{2}\left(\mathbb{Q}_{p}\right)$
by arithmetic lattices. It was suggested by several authors \cite{cartwright2003ramanujan,li2004ramanujan,Lubotzky2005a,sarveniazi2007explicit}
that \emph{Ramanujan complexes }should be defined as quotients of
Bruhat-Tits buildings whose spectral properties agree with those of
the building. The cited papers show that such complexes do exist,
and the papers \cite{FGL+11,evra2014mixing} use these spectral properties
to obtain some types of combinatorial expansion.

However, all of the definitions and applications in the cited papers
only refer to the spectrum of operators acting on the vertices of
the complex in question. The spectrum of these operators is encoded
in the spherical representations of the group $PGL_{d},$ and these
correspond to representations of the Hecke algebra of the group, which
is commutative \cite{macdonald1979symmetric}. In this paper, we investigate
the spectrum of the high-dimensional Hodge-Laplace operators, which
encode the homology of the complex in all dimensions \cite{Eck44}.
To achieve this, we interpret (see \prettyref{prop:Iwahori-boundaries})
the simplicial boundary and coboundary maps on Ramanujan complexes
as intertwining maps between different representations of the (non-commutative)
Iwahori-Hecke algebra of $PGL_{d}$. 

For Ramanujan complexes of dimension two, we compute the Hodge-Laplace
spectra in all dimensions, and show that unlike the situation in graphs,
the nontrivial spectrum in dimension one is concentrated in \emph{two}
narrow strips. We give here a loose version, and a tight one appears
in \prettyref{thm:ram-triangle-spec}.
\begin{thm}
\label{thm:Ram-spec}Let $X$ be a Ramanujan complex of type $\widetilde{A}_{2}$,
and $\Delta_{i}=\delta^{*}\delta+\delta\delta^{*}$ the simplicial
Hodge-Laplace operator in dimension $i\in\left\{ 0,1,2\right\} $
(see definitions in \prettyref{sec:Complexes-and-buildings}). Then
the nontrivial spectrum of $\Delta_{i}$ is contained in 
\[
\Delta_{0}:\mathfrak{S}_{0}\qquad\Delta_{1}:\mathfrak{S}_{1}\cup\mathfrak{S}_{0},\qquad\Delta_{2}:\left\{ 0\right\} \cup\mathfrak{S}_{1},
\]
where
\begin{align*}
\mathfrak{S}_{0} & =\left[k_{0}-6\sqrt{k_{0}-1},k_{0}+3\sqrt{k_{0}-1}\right]\\
\mathfrak{S}_{1} & =\left[k_{1}-2\sqrt{k_{1}-1},k_{1}+2\sqrt{k_{1}-1}\right]\bigcup\left[2k_{1}-1,2k_{1}+8\right],
\end{align*}
and $k_{0}$ (resp.\ $k_{1}$) is the vertex degree (resp.\ edge
degree) in $X$.
\end{thm}
In the second part of the paper (Section \ref{sec:Combinatorial-expansion})
we explore the combinatorial information which is encoded in the Hodge-Laplace
spectrum. The results apply to any complex, and not only to quotients
of Bruhat-Tits buildings, so that Section \ref{sec:Combinatorial-expansion}
can be read independently. In §\ref{sec:isop-expansion} we prove
the following theorem, which generalizes the isoperimetric inequalities
from \cite{AM85,parzanchevski2012isoperimetric}:
\begin{thm}
\label{thm:cheeger-gen-dim}Let $X$ be a $d$-dimensional complex
on $n$ vertices, and $Z_{i}=Z_{i}\left(X\right)$ the space of $i$-dimensional
cycles. If $\Spec\Delta_{i}\big|_{Z_{i}}\subseteq\left[k_{i}-\mu_{i},k_{i}+\mu_{i}\right]$
for $0\leq i\leq d-2$ and $\Spec\Delta_{d-1}\big|_{Z_{d-1}}\subseteq\left[\lambda_{d-1},\infty\right)$,
then for any partition $\mathrm{Verts}\left(X\right)=\coprod_{i=0}^{d}A_{i}$
\[
\frac{\left|X\left(A_{0},\ldots,A_{d}\right)\right|n^{d}}{\left|A_{0}\right|\cdot\ldots\cdot\left|A_{d}\right|}\geq k_{0}\cdot\ldots\cdot k_{d-2}\cdot\lambda_{d-1}\left(1-\frac{\mu_{d-2}}{k_{d-2}}-C_{d}\left(\frac{\mu_{0}}{k_{0}}+\ldots+\frac{\mu_{d-2}}{k_{d-2}}\right)\frac{n^{d+1}}{\prod_{i=0}^{d}\left|A_{i}\right|}\right),
\]
where $X\left(A_{0},\ldots,A_{d}\right)$ are the $d$-cells of $X$
in $A_{0}\times\ldots\times A_{d}$, and $C_{d}$ depends only on
$d$.
\end{thm}
For a complex with a complete skeleton, one has $k_{i}=n$ and $\mu_{i}=0$
for $0\leq i\leq d-2$, hence the r.h.s.\ above reads as $n^{d-1}\lambda_{d-1}$,
recovering Theorem 1.2 in \cite{parzanchevski2012isoperimetric}.

In \prettyref{thm:mixing-general} we prove a generalization of the
Expander Mixing Lemma (cf.\ \cite[§2.4]{HLW06}), showing that concentration
of the Hodge-Laplace spectrum implies a pseudo-random behavior. Combining
these combinatorial theorems with \prettyref{thm:ram-triangle-spec},
we obtain the following results on Ramanujan complexes of type $\widetilde{A}_{2}$,
which show that they enjoy isoperimetric expansion, pseudo-randomness
and large chromatic number.
\begin{thm}
\label{thm:intro-cheeger-mixing}Let $X$ be a Ramanujan complex of
type $\widetilde{A}_{2}$ with $n$ vertices, vertex degree $k_{0}=2\left(q^{2}+q+1\right)$
and edge degree $k_{1}=q+1$. Fix a constant $\vartheta>0$.
\begin{enumerate}
\item \noun{\label{enu:(Isoperimetry)-If-}(Isoperimetry)} If $X$ is not
tripartite, then for any partition of the vertices of $X$ into sets
$A_{0},A_{1},A_{2}$ of sizes at least $\vartheta n,$ 
\[
\frac{\left|X\left(A_{0},A_{1},A_{2}\right)\right|n^{2}}{\left|A_{0}\right|\left|A_{1}\right|\left|A_{2}\right|}\geq2q^{3}-4q^{2.5}-C\cdot\frac{q^{2}}{\vartheta^{3}}.
\]
\item \noun{\label{enu:(Pseudo-randomness)-If-}(Pseudo-randomness)} If
$X$ is tripartite, let $A,B,C,D$ be disjoint sets of vertices such
that each of $A\cup D$, $B$ and $C$ is contained in a different
block of the tripartition of $X$. If $A,B,C$ and $D$ are of sizes
at most $\vartheta n$, then
\[
\left|\left|X^{2}\left(A,B,C,D\right)\right|-\frac{27q^{4}\left|A\right|\left|B\right|\left|C\right|\left|D\right|}{n^{3}}\right|\leq\left(65q^{3.5}\vartheta+244q^{2.5}\right)\vartheta n
\]
where $X^{2}\left(A,B,C,D\right)$ are the pairs of triangles $t_{1}\in A\times B\times C$,
$t_{2}\in B\times C\times D$ which share an edge ($\left|t_{1}\cap t_{2}\right|=2$).
\item \label{enu:chrom}If $X$ is not tripartite, the chromatic number
of $X$ is at least $\frac{\sqrt[3]{q}}{5}$.
\end{enumerate}
\end{thm}
Here the chromatic number is the minimal number of colors needed to
color the vertices of the complex with no monochromatic triangle.
It is interesting to compare the last result with that of \cite{evra2014mixing},
which studies mixing in Ramanujan complexes using the spherical representations
alone (which correspond to operators on the vertices of the complex).
They show that the chromatic number of such a complex is at least
$\frac{\sqrt[6]{q}}{2}$, and we expect that in higher dimensions
our new methods should lead to an even greater advantage over the
spherical analysis.
\begin{acknowledgement*}
We thank Alex Lubotzky for providing us guidance and inspiration,
and to Uriya First and Dima Trushin for patiently introducing us to
the representation theory of $p$-adic groups. We are grateful to
Shai Evra, Mark Goresky, Alex Kontorovich and Winnie Li for helpful
discussions and suggestions. The first author was supported by the
ERC and by the Israel Science Foundation. The second author was supported
by The Fund for Math and by NSF grant DMS-1128155, and is grateful
for the hospitality of the Institute for Advanced Study.
\end{acknowledgement*}

\section{\label{sec:Complexes-and-buildings}Complexes and buildings}

We recall the basic elements of simplicial Hodge theory, and of Bruhat-Tits
buildings of type $\widetilde{A}_{d}$. For a more relaxed exposition
of the former we refer to \cite[§2]{parzanchevski2012isoperimetric},
and for the latter to \cite{li2004ramanujan,Lubotzky2005a,Lubotzky2013}. 

\subsection{\label{subsec:Simplicial-Hodge-Theory}Simplicial Hodge Theory}

For a finite simplicial complex $X$ of dimension $d$, denote by
$X^{i}$ the set of cells of dimension $i$ in $X$ ($i$\emph{-cells}).
The \emph{degree} of an $i$-cell is the number of $\left(i+1\right)$-cells
which contain it. Denote by $\Omega^{i}=\Omega^{i}\left(X\right)$
the space of $i$-forms, namely skew-symmetric complex functions on
the oriented $i$-cells, equipped with the inner product $\left\langle f,g\right\rangle =\sum_{\sigma\in X^{i}}f\left(\sigma\right)\overline{g\left(\sigma\right)}$.
The $i$-th boundary map $\partial_{i}:\Omega^{i}\rightarrow\Omega^{i-1}$
is defined by $\left(\partial_{i}f\right)\left(\sigma\right)=\sum_{v:v\sigma\in X^{i}}f\left(v\sigma\right)$,
its dual is the $i$-th coboundary map $\delta_{i}=\partial_{i}^{*}:\Omega^{i-1}\rightarrow\Omega^{i}$,
given by $\left(\delta_{i}f\right)\left(\sigma\right)=\sum_{j=0}^{i}\left(-1\right)^{j}f\left(\sigma\backslash\sigma_{j}\right)$,
and $Z_{i}=\ker\partial_{i}$, $Z^{i}=\ker\delta_{i+1}$, $B_{i}=\im\partial_{i+1}$
and $B^{i}=\im\delta_{i}$ are the cycles, cocycles, boundaries and
coboundaries, respectively.

The \emph{upper, lower} and \emph{full $i$-Laplacians} are $\Delta_{i}^{+}=\partial_{i+1}\delta_{i+1}$,
$\Delta_{i}^{-}=\delta_{i}\partial_{i}$ and $\Delta_{i}=\Delta_{i}^{+}+\Delta_{i}^{-}$,
respectively. Their spectra are closely related: $\Spec\Delta_{i}^{+}$
coincides with $\Spec\Delta_{i+1}^{-}$, up to a difference in the
the multiplicity of zero (which is determined by the number of $i$-cells
and $i+1$-cells), and the spectrum of $\Delta_{i}$ is, up to zeros,
the union of $\Spec\Delta_{i}^{+}$ and $\Spec\Delta_{i}^{-}$. It
is most convenient for our purposes to work with the upper Laplacian,
and $\Delta_{0}^{+}$ is the classical graph Laplacian:
\begin{equation}
\left(\Delta_{0}^{+}f\right)\left(v\right)=\deg\left(v\right)f\left(v\right)-\sum_{w\sim v}f\left(w\right).\label{eq:graph-laplace}
\end{equation}

\subsection{\label{subsec:Bruhat-Tits-buildings}Bruhat-Tits buildings}

Let $F$ be a nonarchimedean local field with ring of integers $\mathcal{O}$,
uniformizer $\pi$, and residue field $\nicefrac{\mathcal{O}}{\pi\mathcal{O}}$
of size $q$, which we identify with $\mathbb{F}_{q}$. We denote
by $\mathcal{B}=\mathcal{B}_{d}\left(F\right)$ the \emph{building
of type $\widetilde{A}_{d-1}$ }associated with $F$, which is defined
as follows. The vertices of $\mathcal{B}$ are the left $K$-cosets
in $G$, where $G=PGL_{d}\left(F\right)$ and $K=PGL_{d}\left(\mathcal{O}\right)$.
Each vertex $gK$ is associated with the homothety class of the $\mathcal{O}$-lattice
$g\mathcal{O}^{d}$. A collection of vertices $\left\{ g_{i}K\right\} _{i=0..r}$
forms an $r$-cell if, possibly after reordering, there exist representatives
$g'_{i}\in GL_{d}\left(F\right)$ for $g_{i}$, such that
\begin{equation}
\pi g'_{0}\mathcal{O}^{d}<g'_{r}\mathcal{O}^{d}<g'_{r-1}\mathcal{O}^{d}<\ldots<g'_{1}\mathcal{O}^{d}<g'_{0}\mathcal{O}^{d}.\label{eq:cell-relation}
\end{equation}
The group $G$ acts on $\mathcal{B}$ by left translation, and if
$\Gamma$ is a torsion-free lattice in $G$ then the quotient $X=\Gamma\backslash\mathcal{B}$
is a finite complex. In the case $d=2$, the building $\mathcal{B}_{2}\left(\mathbb{Q}_{p}\right)$
is a $\left(p+1\right)$-regular tree, and its quotients by lattices
in $G=PGL_{2}\left(\mathbb{Q}_{p}\right)$ are $\left(p+1\right)$-regular
graphs. Certain lattices give rise to Ramanujan quotients:
\begin{thm}[\cite{LPS88,margulis1988explicit}, cf.\ \cite{Sarnak1990,Lub10}]
If $\Gamma$ is a congruence subgroup of a torsion-free arithmetic
lattice in $G$, then $\Gamma\backslash\mathcal{B}_{2}$ is a Ramanujan
graph.
\end{thm}
Namely, its spectrum is contained within $\left\{ -p-1\right\} \cup\left[-2\sqrt{p},2\sqrt{p}\right]\cup\left\{ p+1\right\} $,
where the eigenvalues $\pm\left(p+1\right)$ are the \emph{trivial}
ones: $p+1$ corresponds to the constant function on the vertices,
and if the graph $\Gamma\backslash\mathcal{B}_{2}$ is bipartite,
$-\left(p+1\right)$ appears as an eigenvalue of the function which
takes one value on one side and the opposite value on the other.

\subsection{\label{subsec:Trivial-spectrum-of}Trivial spectrum of complexes}

The function $\tau=\ord_{\pi}\det:PGL_{d}\left(F\right)\rightarrow\nicefrac{\mathbb{Z}}{d\mathbb{Z}}$
induces a $d$-partition on the vertices of $\mathcal{B}_{d}$. We
define the type of a cell $\sigma$ in $\mathcal{B}_{d}$ to be the
subset $\left\{ \tau\left(v\right)\,\middle|\,v\in\sigma\right\} $
of $\nicefrac{\mathbb{Z}}{d\mathbb{Z}}$, and say that a function
on $X^{i}=\Gamma\backslash\mathcal{B}_{d}^{i}$ is trivial if its
lift to $\mathcal{B}_{d}^{i}$ is constant on each type. An eigenvalue
of $\Delta_{i}$ (or of $\Delta_{i}^{\pm}$) is called trivial if
it is obtained from a trivial eigenfunction, and thus the nontrivial
spectrum of these operators is obtained from their restriction to
the functions which sum to zero on each type.

In dimension two, the Bruhat-Tits building $\mathcal{B}_{3}$ has
constant vertex and edge degrees 
\[
k_{0}=2\left(q^{2}+q+1\right)\qquad\mbox{and}\qquad k_{1}=q+1
\]
respectively, and $\tau$ partites the vertices of $\mathcal{B}_{3}$
into three parts. We say that $X=\Gamma\backslash\mathcal{B}_{3}$
is tripartite when $\Gamma$ preserves this partition. In this case
the trivial eigenfunctions on vertices are 
\begin{center}
\begin{tabular}{ccc}
\toprule 
Eigenfunction & Eigenvalue of $\Delta_{0}^{+}$ & Eigenvalue of $\Delta_{0}^{-}$\tabularnewline
\midrule
\midrule 
$\one$ & $0$ & $\left|X^{0}\right|$\tabularnewline
\midrule 
$v\mapsto\omega^{\tau\left(v\right)}$, $\omega=e^{\frac{\pm2\pi i}{3}}$ & $\frac{3k_{0}}{2}$ & $0$\tabularnewline
\bottomrule
\end{tabular}
\par\end{center}

When $X$ is not tripartite, only the constant function is in the
trivial spectrum. The edges of $\mathcal{B}_{3}$ have a canonical
orientation by setting $\tau\left(\mathrm{term}\left(e\right)\right)\equiv\tau\left(\mathrm{orig}\left(e\right)\right)+1\Mod{3}$,
and we can thus define a form in $f\in\Omega^{1}\left(\mathcal{B}_{3}\right)$
by assigning a value to the positively oriented edges. Furthermore,
the action of $\Gamma$ always preserves this orientation, so that
the same holds for $X=\Gamma\backslash\mathcal{B}_{3}$.\footnote{Namely, $X$ is always \emph{disorientable}, see \cite[Def.\ 2.6]{Parzanchevski2012walks}.}
The trivial eigenforms in $\Omega^{1}\left(X\right)$ for a tripartite
$X$ are
\begin{center}
\begin{tabular}{ccc}
\toprule 
Eigenform on positive direction & Eigenvalue of $\Delta_{1}^{+}$ & Eigenvalue of $\Delta_{1}^{-}$\tabularnewline
\midrule
\midrule 
$\one$ & $3k_{1}$ & $0$\tabularnewline
\midrule 
$e\mapsto\omega^{\tau\left(\mathrm{orig}\left(e\right)\right)}$,
$\omega=e^{\frac{\pm2\pi i}{3}}$ & $0$ & $\frac{3k_{0}}{2}$\tabularnewline
\bottomrule
\end{tabular}
\par\end{center}

and again, in the non-tripartite case only the constant one appears.

\subsection{Ramanujan complexes}

There are several plausible ways to define what are Ramanujan complexes,
and these are discussed in \cite{cartwright2003ramanujan,li2004ramanujan,Lubotzky2005a,kang2010zeta,first2016ramanujan}.
However, it can be shown that they all agree for complexes of type
$\widetilde{A}_{2}$, and amount to the following.
\begin{defn}
The complex $X=\Gamma\backslash\mathcal{B}_{3}$ is \emph{Ramanujan}
if the nontrivial spectrum of the Laplace operators in every dimension
is contained within that of the corresponding Laplace operators on
$L^{2}\left(\mathcal{B}_{3}\right)$.

\end{defn}
The papers \cite{li2004ramanujan,Lubotzky2005b,sarveniazi2007explicit}
give several constructions of Ramanujan complexes, some of which are
the clique complexes of Cayley graphs. 

The following theorem determines the spectrum of the upper Laplacians
on two-dimensional Ramanujan complexes. The spectra of the full and
lower Laplacians can be inferred from it (see §\ref{subsec:Simplicial-Hodge-Theory}).
By definition, this is the same as determining the $L^{2}$-spectrum
of the Laplacians on $\mathcal{B}_{3}$ itself, but in addition we
determine the multiplicity of eigenvalues on the finite quotients.
\begin{thm}
\label{thm:ram-triangle-spec}Let $X$ be a Ramanujan quotient of
$\mathcal{B}_{3}$ with $n$ vertices, and vertex and edge degrees
$k_{0}=2\left(q^{2}+q+1\right)$ and $k_{1}=q+1$. If $X$ is non-tripartite,
then
\begin{enumerate}
\item $\Delta_{0}^{+}$ has the trivial eigenvalue $0$, and $n-1$ nontrivial
eigenvalues in $\left[k_{0}-6q,k_{0}+3q\right]$.
\item $\Delta_{1}^{+}$ has
\begin{enumerate}
\item The trivial eigenvalue $3k_{1}$.
\item $n-1$ zeros, corresponding to $B^{1}\left(X\right)$ (coboundaries).
\item For every nontrivial $\lambda\in\Spec\Delta_{0}^{+}$, the eigenvalues
$\frac{3k_{1}}{2}\pm\sqrt{\left(\frac{3k_{1}}{2}\right)^{2}-\lambda}$.
\\
This amounts to $n-1$ eigenvalues in each of the strips
\begin{equation}
\begin{aligned}\mathcal{I}_{-} & =\left[\tfrac{3k_{1}}{2}-\sqrt{\left(\tfrac{k_{1}}{2}\right)^{2}+8q},k_{1}+1\right]\\
\mathcal{I}_{+} & =\left[2k_{1}-1,\tfrac{3k_{1}}{2}+\sqrt{\left(\tfrac{k_{1}}{2}\right)^{2}+8q}\right].
\end{aligned}
\label{eq:good-bad-interval}
\end{equation}
\item $n\left(q^{2}+q-2\right)+2$ eigenvalues in the strip 
\begin{equation}
\mathcal{I}=\left[k_{1}-2\sqrt{q},k_{1}+2\sqrt{q}\right].\label{eq:excellent-interval}
\end{equation}
\end{enumerate}
\end{enumerate}
If $X$ is tripartite, then
\begin{enumerate}
\item $\Delta_{0}^{+}$ has trivial spectrum $\left\{ 0,\frac{3k_{0}}{2},\frac{3k_{0}}{2}\right\} $
(see \ref{subsec:Trivial-spectrum-of}), and $n-3$ eigenvalues in
$\left[k_{0}-6q,k_{0}+3q\right]$.
\item $\Delta_{1}^{+}$ has
\begin{enumerate}
\item The trivial eigenvalue $3k_{1}$, and two trivial zeros (both coming
from $B^{1}\left(X\right)$).
\item $n-3$ nontrivial zeros, all coming from $B^{1}\left(X\right)$.
\item $n-3$ eigenvalues in each of $\mathcal{I}_{\pm}$, corresponding
to $\frac{3k_{1}}{2}\pm\sqrt{\left(\frac{3k_{1}}{2}\right)^{2}-\lambda}$
for $\lambda$ a nontrivial eigenvalue of $\Delta_{0}^{+}$.
\item $n\left(q^{2}+q-2\right)+6$ eigenvalues in $\mathcal{I}$.
\end{enumerate}
\end{enumerate}
\end{thm}
Let us make a few remarks:
\begin{enumerate}
\item The spectrum of $\Delta_{0}^{+}$ is well-known \cite{macdonald1979symmetric,li2004ramanujan,Lubotzky2005a},
but our methods are different, and give the spectrum in all dimensions
in a unified manner.
\item These bounds are sharp: a sequence of Ramanujan complexes with injectivity
radius growing to infinity (as constructed in \cite{lubotzky2007moore})
has Laplace spectra which accumulate to any point in these intervals.
This follows from \cite{li2004ramanujan} for dimension zero and from
\cite[§3.5]{Parzanchevski2012walks} for general dimension.
\item All the zeros in the spectra of $\Delta_{0}^{+}$ and $\Delta_{1}^{+}$
come from $B^{1}\left(X\right)$, so that the zeroth and first Betti
numbers of $X$ vanish, in accordance with \cite{Gar73,Casselman1974}.
\end{enumerate}
It is interesting to compare Theorem \ref{thm:ram-triangle-spec}
with Garland's spectral bounds:
\begin{thm*}[Garland's bound, \cite{Gar73,papikian2008eigenvalues,Gundert2013}]
If $X$ is a finite complex such that $\Spec\Delta_{0}^{+}\left(\mathrm{link}\left(\sigma\right)\right)\big|_{Z_{0}\left(\mathrm{link}\left(\sigma\right)\right)}\subseteq\left[\lambda,\Lambda\right]$
for every $\sigma\in X^{j-2}$ and $k\leq\deg\sigma\leq K$ for every
$\sigma\in X^{j-1}$, then
\[
\Spec\Delta_{j}^{+}\left(X\right)\big|_{Z_{j}\left(X\right)}\subseteq\left[\left(j+1\right)\lambda-jK,\left(j+1\right)\Lambda-jk\right].
\]
\end{thm*}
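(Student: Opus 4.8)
The plan is to prove the statement by a local-to-global averaging argument in the spirit of Garland, bounding $\Delta_j^+(X)$ on $Z_j(X)$ cell-by-cell over the $(j-1)$-cells and their links. First I would recall the fundamental decomposition: for $f \in \Omega^j(X)$ one has the ``Garland'' identity expressing the global Rayleigh quotient $\langle \Delta_j^+ f, f\rangle = \langle \delta_{j+1} f, \delta_{j+1} f\rangle$ in terms of local data on the links of the $(j-1)$-cells. Concretely, for each $\sigma \in X^{j-1}$ the link $\lk\sigma$ is a graph (the vertices being those $v$ with $\sigma v \in X^j$, edges being those $\{v,w\}$ with $\sigma vw \in X^{j+1}$), and $f$ restricts to a ``localized'' $1$-form $f_\sigma$ on oriented edges of $\lk\sigma$ via $f_\sigma(v,w) = f(\sigma v w)$ (with an appropriate sign/orientation convention for $\sigma$). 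The key computation, which appears as \cite[Lem.\ 4.2]{parzanchevski2012isoperimetric}, is that summing the graph-Laplacian Rayleigh quotients over all links reconstitutes the global one up to an explicit correction:
\begin{equation}
\sum_{\sigma \in X^{j-1}} \langle \Delta_0^+(\lk\sigma) f_\sigma, f_\sigma \rangle = \langle \Delta_j^+(X) f, f\rangle + j \langle \Delta_{j-1}^+(X) \text{(something)}, \ldots\rangle + (\text{diagonal terms}),
\end{equation}
and more precisely, after rearranging, $\langle \Delta_j^+ f, f\rangle = \sum_\sigma \langle \Delta_0^+(\lk\sigma) f_\sigma, f_\sigma\rangle - j \sum_\sigma \|f_\sigma\|^2 + (\text{a term that vanishes on } Z_j)$; the degree-weighted counting of how many times each $j$-cell appears across the links of its $(j-1)$-faces produces the factor $(j+1)$.

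The second step is to handle the restriction to cycles. The point of passing to $Z_j(X) = \ker \partial_j$ is twofold: it is where the nontrivial spectrum lives by \prettyref{eq:nontrivial-spec}, and crucially, if $f \in Z_j(X)$ then each localized form $f_\sigma$ lies in $Z_0(\lk\sigma)$ — that is, $\sum_v f_\sigma(v,w)$ summed appropriately vanishes, because this sum is exactly (a component of) $\partial_j f$ evaluated near $\sigma$. Hence the hypothesis $\Spec \Delta_0^+(\lk\sigma)|_{Z_0(\lk\sigma)} \subseteq [\lambda, \Lambda]$ applies to every $f_\sigma$, giving $\lambda \|f_\sigma\|^2 \leq \langle \Delta_0^+(\lk\sigma) f_\sigma, f_\sigma\rangle \leq \Lambda \|f_\sigma\|^2$. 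Summing over $\sigma \in X^{j-1}$ and using the degree bounds $k \leq \deg\sigma \leq K$ together with the identity $\sum_\sigma \|f_\sigma\|^2 = (j+1)\|f\|^2$ (each $j$-cell $\tau$ has $j+1$ faces of dimension $j-1$, and $f(\tau)^2$ is counted once per face), one converts the local eigenvalue bounds into $(j+1)\lambda \|f\|^2 - jK\|f\|^2 \leq \langle \Delta_j^+ f, f\rangle \leq (j+1)\Lambda\|f\|^2 - jk\|f\|^2$, where the $-jK$ and $-jk$ come from bounding the $-j\sum_\sigma\|f_\sigma\|^2$-type correction term by pulling out the worst-case degree. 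This is exactly the claimed interval $[(j+1)\lambda - jK, (j+1)\Lambda - jk]$.

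The main obstacle — and the step demanding the most care — is the bookkeeping in the Garland identity: getting the orientation and sign conventions consistent so that the cross terms from $\langle \delta_{j+1}f, \delta_{j+1}f\rangle$ reorganize correctly into link-Laplacian quadratic forms, and verifying precisely which terms survive and which vanish on $Z_j$. One must be careful that the ``diagonal'' contributions (the $f(\tau)^2$ terms) are apportioned correctly: the upper Laplacian $\Delta_0^+(\lk\sigma)$ contributes $\deg_{\lk\sigma}(v)$ on the diagonal, and summing $\deg_{\lk\sigma}(v) = \deg_X(\sigma v)$-type quantities over all $\sigma$ and all relevant vertices is what forces the degree bounds $k, K$ to enter. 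I would organize this by first writing everything at the level of the single link (citing the exact form of \cite[Lem.\ 4.2]{parzanchevski2012isoperimetric}, which handles precisely the $j=1$ normalization quoted in the footnote and extends verbatim), then summing, and only at the end invoking the spectral hypothesis on each link. Since the result is explicitly attributed to \cite{Gar73, papikian2008eigenvalues, Gundert2012}, the honest thing is to present this as a streamlined re-derivation in the paper's (unnormalized) conventions rather than claim novelty.
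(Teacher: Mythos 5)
Your proposal follows the standard Garland local-to-global strategy, which is indeed the right approach and essentially what the cited references do; but there are two concrete errors in the way you have written it up.

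First, the definition of the localized form does not typecheck. You write, for $\sigma\in X^{j-1}$, that ``$f$ restricts to a localized $1$-form $f_\sigma$ on oriented edges of $\lk\sigma$ via $f_\sigma(v,w)=f(\sigma vw)$.'' But $\sigma vw$ has $j+2$ vertices, so it is a $(j+1)$-cell, and $f$ is a $j$-form; $f(\sigma vw)$ is undefined. What you actually use in the rest of the argument --- pairing with $\Delta_0^+(\lk\sigma)$, invoking $Z_0(\lk\sigma)$, and the identity $\sum_\sigma\|f_\sigma\|^2=(j+1)\|f\|^2$ --- is consistent only with the \emph{zero}-form localization $f_\sigma(v)=f(\sigma v)$, a function on vertices of $\lk\sigma$. (Equivalently, one can localize at a $(j-2)$-cell $\sigma$ and get a genuine $1$-form $f_\sigma(vw)=f(\sigma vw)$ on $\lk\sigma$, but then one must work with $\Delta_1^-(\lk\sigma)$ rather than $\Delta_0^+$; you have mixed the two conventions. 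As an aside, the paper's own indexing ``$\sigma\in X^{j-2}$'' paired with $\Delta_0^+(\lk\sigma)$ and ``$\deg\sigma$ for $\sigma\in X^{j-1}$'' appears to be off by one --- the application to $\mathcal{B}_3$, with $j=1$, uses links of vertices and the edge degree $k_1$ --- so you should not mirror the statement's indexing uncritically.)

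Second, the Garland identity as you display it cannot produce the claimed bound. You write
\[
\langle\Delta_j^+f,f\rangle=\sum_\sigma\langle\Delta_0^+(\lk\sigma)f_\sigma,f_\sigma\rangle-j\sum_\sigma\|f_\sigma\|^2+(\text{term vanishing on }Z_j),
\]
and then invoke $\sum_\sigma\|f_\sigma\|^2=(j+1)\|f\|^2$. But then the correction is $-j(j+1)\|f\|^2$, a pure multiple of $\|f\|^2$ independent of the degrees, and there is nothing to bound by $k$ or $K$. The correct correction term carries the degree weights: writing $\tau=\sigma v$, one has $\deg_{\lk\sigma}(v)=\deg_X(\sigma v)$, so the diagonal of $\sum_\sigma\Delta_0^+(\lk\sigma)$ is $(j+1)\sum_{\tau\in X^j}\deg_X(\tau)|f(\tau)|^2$, while the diagonal of $\Delta_j^+$ is $\sum_\tau\deg_X(\tau)|f(\tau)|^2$. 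The correction term is therefore of the form $-j\sum_{\tau\in X^j}\deg_X(\tau)\,|f(\tau)|^2$, and only \emph{this} expression is squeezed between $-jK\|f\|^2$ and $-jk\|f\|^2$ by the degree hypothesis. Your later sentence (``the $-jK$ and $-jk$ come from bounding the $-j\sum_\sigma\|f_\sigma\|^2$-type correction term by pulling out the worst-case degree'') shows you know the degrees must enter, but the displayed identity does not contain them, so as written the derivation is broken. You should write the correction term with the degree weight, and also sort out the orientation signs so the off-diagonal parts of $\sum_\sigma\Delta_0^+(\lk\sigma)$ and $\Delta_j^+$ actually match up --- that is where the substance of \cite[Lem.\ 4.2]{parzanchevski2012isoperimetric} lies. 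The steps you do get right --- that $f\in Z_j(X)$ forces $f_\sigma\in Z_0(\lk\sigma)$ because $\sum_v f_\sigma(v)=\pm(\partial_jf)(\sigma)$, and that $\sum_\sigma\|f_\sigma\|^2=(j+1)\|f\|^2$ --- are exactly the two structural points that make the argument go through once the identity is stated correctly.
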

The links of vertices in $\mathcal{B}_{3}$ are incidence graphs of
projective planes over $\mathbb{F}_{q}$, which have $\lambda=k_{1}-\sqrt{q}$
and $\Lambda=2k_{1}$, so that Garland's bound implies that any quotient
of $\mathcal{B}_{3}$ satisfies 
\[
\Spec\Delta_{1}^{+}\big|_{Z_{1}}\subseteq\left[k_{1}-2\sqrt{q},3k_{1}\right].
\]
\prettyref{thm:ram-triangle-spec} shows that both ends are tight!
The drawback of Garland's method is that it misses the sparse picture
within this interval, which is crucial for our combinatorial purposes,
namely, the results in \prettyref{sec:isop-expansion} and \prettyref{sec:pseudo-randomness}.
The proof of \prettyref{thm:ram-triangle-spec} occupies the next
three sections.

\section{Computation of the Laplace spectrum}

\subsection{\label{sec:Iwahori-Hecke-boundary-maps}Boundary maps as Iwahori-Hecke
operators}

In this section we translate the simplicial boundary and coboundary
maps into intertwining operators between different representations
arising from the group $PGL_{3}$. Keeping the notations of §\ref{subsec:Bruhat-Tits-buildings},
we fix the ``fundamental'' vertex $v_{0}=K$ in $\mathcal{B}^{0}=\nicefrac{G}{K}$.
It follows from the fact that $\Gamma$ is torsion-free that it acts
freely on vertices, and thus if we normalize the Haar measure on $G$
so that $\mu\left(K\right)=1$, we have $\mu\left(\Gamma\backslash G\right)=n$.
Furthermore, this implies a linear isometry $\Omega^{0}\left(X\right)\cong L^{2}\left(\Gamma\backslash G/K\right)$,
given explicitly by $f\left(gv_{0}\right)=f\left(\Gamma gK\right)$.
We identify $L^{2}\left(\Gamma\backslash G/K\right)$ with $L^{2}\left(\Gamma\backslash G\right)^{K}$,
the space of $K$-fixed vectors in the $G$-representation $L^{2}\left(\Gamma\backslash G\right)$.

The element $\sigma=\left(\begin{smallmatrix}0 & 1 & 0\\
0 & 0 & 1\\
\pi & 0 & 0
\end{smallmatrix}\right)\in G$ acts on $\mathcal{B}$ by rotation on the triangle consisting of
the vertices $v_{0}$, $\sigma v_{0}$, and $\sigma^{2}v_{0}$. We
fix the oriented edge $e_{0}=\left[v_{0},\sigma v_{0}\right]$, and
define 
\begin{equation}
E=\stab_{G}e_{0}=K\cap\sigma K\sigma^{-1}=\left\{ \left(\begin{smallmatrix}* & * & *\\
* & * & *\\
x & y & *
\end{smallmatrix}\right)\in K\,\middle|\,x,y\in\pi\mathcal{O}\right\} .\label{eq:E-def}
\end{equation}
Since $G$ acts transitively on the non-oriented edges of $\mathcal{B}$,
and preserves the canonical orientation from §\ref{subsec:Trivial-spectrum-of},
the positively oriented edges of $X$ correspond to double cosets
$\Gamma\backslash G/E$, giving an identification of $\Omega^{1}\left(X\right)$
with $L^{2}\left(\Gamma\backslash G\right)^{E}$, by 
\begin{equation}
f\left(ge_{0}\right)=f\left(\left[gv_{0},g\sigma v_{0}\right]\right)=\sqrt{\mu\left(E\right)}f\left(\Gamma g\right),\quad f\left(\left[g\sigma v_{0},gv_{0}\right]\right)=-\sqrt{\mu\left(E\right)}f\left(\Gamma g\right),\label{eq:1-forms-as-E-stable-vecs}
\end{equation}
where $\mu\left(E\right)=\frac{\mu\left(K\right)}{\left[K:E\right]}=\frac{1}{q^{2}+q+1}.$
The scaling by $\sqrt{\mu\left(E\right)}$ is needed to make the isomorphism
$\Omega^{1}\left(X\right)\cong L^{2}\left(\Gamma\backslash G\right)^{E}$
an isometry: if $\left\{ g_{i}e_{0}\right\} _{i=1}^{nk_{0}/2}$ represent
the edges of $X$ and $f\in\Omega^{1}\left(X\right)$ then
\[
\left\Vert f\right\Vert _{\Omega^{1}\left(X\right)}^{2}=\sum_{i}\left|f\left(g_{i}e_{0}\right)\right|^{2}=\sum_{i}\mu\left(E\right)\left|f\left(\Gamma g_{i}\right)\right|^{2}=\int_{\Gamma\backslash G}\left|f\left(\Gamma g\right)\right|^{2}dg=\left\Vert f\right\Vert _{L^{2}\left(\Gamma\backslash G\right)}^{2}.
\]
We fix the triangle $t_{0}=\left[v_{0},\sigma v_{0},\sigma^{2}v_{0}\right]$,
whose pointwise stabilizer is the Iwahori subgroup 
\[
I=K\cap\sigma K\sigma^{-1}\cap\sigma^{2}K\sigma^{-2}=\left\{ \left(\begin{smallmatrix}* & * & *\\
x & * & *\\
y & z & *
\end{smallmatrix}\right)\in K\,\middle|\,x,y,z\in\pi\mathcal{O}\right\} .
\]
As for edges, $G$ acts transitively on non-oriented triangles, and
preserves triangle orientation. Thus, the stabilizer of $t_{0}$ as
a cell is 
\[
T:=\stab_{G}t_{0}=\left\langle \sigma\right\rangle I=I\sqcup\sigma I\sqcup\sigma^{2}I,
\]
and in particular $\left\langle \sigma\right\rangle $ and $I$ commute.
Again, $f\left(gt_{0}\right)=\sqrt{\mu\left(T\right)}f\left(\Gamma g\right)$
gives a linear isometry $\Omega^{2}\left(X\right)\cong L^{2}\left(\Gamma\backslash G\right)^{T}$,
where $\mu\left(T\right)=3\mu\left(I\right)=\frac{3\mu\left(K\right)}{\left[K:I\right]}=\frac{3}{\left(q^{2}+q+1\right)\left(q+1\right)}$.

\medskip{}

Denoting $K_{0}=K$, $K_{1}=E$, and $K_{2}=T$, we have $\Omega^{i}\left(X\right)\cong L^{2}\left(\Gamma\backslash G\right)^{K_{i}}$.\footnote{This nice picture only holds for $PGL_{3}$. In $PGL_{d}$ with $d\geq4$
the group does not act transitively on each dimension, and there are
also elements which flip orientations in the middle dimension.} As $I\leq E\leq K$ and $I\leq T$, the three spaces $L^{2}\left(\Gamma\backslash G\right)^{K_{i}}$
are contained in $L^{2}\left(\Gamma\backslash G\right)^{I}$. The
Iwahori-Hecke algebra $\mathcal{H}=C_{c}\left(I\backslash G/I\right)$
consists of the compactly supported, bi-$I$-invariant complex functions
on $G$, with multiplication defined by convolution. If $\left(\rho,V\right)$
is a representation of $G$, then $\left(\overline{\rho},V^{I}\right)$
is a representation of $\mathcal{H}$, where $\overline{\rho}\left(\eta\right)v:=\int_{G}\eta\left(g\right)\rho\left(g\right)v\,dg$. 

We proceed to show that the (co-)boundary maps between $L^{2}\left(\Gamma\backslash G\right)^{K_{i}}$
are given by certain intertwining elements in $\mathcal{H}$.
\begin{prop}
\label{prop:Iwahori-boundaries}The following elements of $\mathcal{H}$:
\begin{equation}
\begin{aligned}\partial_{1} & =\tfrac{1}{\sqrt{\mu\left(E\right)}}\left(\one_{K\sigma^{2}}-\one_{K}\right) &  &  & \delta_{1} & =\tfrac{1}{\sqrt{\mu\left(E\right)}}\left(\one_{\sigma K}-\one_{K}\right)\\
\partial_{2} & =\tfrac{1}{\sqrt{\mu\left(E\right)\mu\left(T\right)}}\cdot\one_{ET} &  &  & \delta_{2} & =\tfrac{1}{\sqrt{\mu\left(E\right)\mu\left(T\right)}}\cdot\one_{TE}
\end{aligned}
\label{eq:co_boundary_defs}
\end{equation}
act as the corresponding simplicial operators. Namely, each $\partial_{i}\in\mathcal{H}$
takes $L^{2}\left(\Gamma\backslash G\right)^{K_{i}}$ to $L^{2}\left(\Gamma\backslash G\right)^{K_{i-1}}$
and acts as the boundary operator $\partial_{i}:\Omega^{i}\left(X\right)\rightarrow\Omega^{i-1}\left(X\right)$
with respect to the identifications of $\Omega^{i}\left(X\right)$
with $L^{2}\left(\Gamma\backslash G\right)^{K_{i}}$, and likewise
for $\delta_{i}\in\mathcal{H}$ and $\delta_{i}:\Omega^{i-1}\rightarrow\Omega^{i}$.
\end{prop}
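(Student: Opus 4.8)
\emph{Approach.} The plan is to verify the three claimed equalities of operators pointwise, by unwinding both sides against their definitions. Three preliminary observations organize the calculation. First, each kernel in \eqref{eq:co_boundary_defs} genuinely lies in $\mathcal{H}=C_{c}(I\backslash G/I)$: left $I$-invariance is clear since $I\leq K,E,T$, and right $I$-invariance follows from the same facts together with $\sigma I\sigma^{-1}=I$ (the commutation of $\langle\sigma\rangle$ with $I$ noted above), e.g.\ $K\sigma^{2}I=K(\sigma^{2}I\sigma^{-2})\sigma^{2}=K\sigma^{2}$. Second, each operator has the asserted target space, since its kernel is left-$K_{i-1}$-invariant as a function on $G$: this is immediate for $\one_{K},\one_{K\sigma^{2}},\one_{ET},\one_{TE}$, and for $\one_{\sigma K}$ one uses $E\leq\sigma K\sigma^{-1}$, whence $E\sigma K\subseteq\sigma K$; thus $\overline{\rho}(\partial_{i})$ maps $L^{2}(\Gamma\backslash G)^{K_{i}}$ into $L^{2}(\Gamma\backslash G)^{K_{i-1}}$, and dually for $\delta_{i}$. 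Third, the basic evaluation identity: if $K'\leq G$ is compact open, $S$ is a union of right $K'$-cosets, and $f\in L^{2}(\Gamma\backslash G)^{K'}$, then $\int_{S}f(\Gamma gg')\,dg'=\mu(K')\sum_{h}f(\Gamma gh)$, the sum over representatives of the right $K'$-cosets in $S$.

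\emph{The computations.} Fix $g\in G$. Combining the evaluation identity with $K=\coprod_{h}hE$ and $K\sigma^{2}=\coprod_{h}hE$ (for the latter, $\sigma^{2}E\sigma^{-2}$ fixes $v_{0}$, hence lies in $K$) gives, for $f\in L^{2}(\Gamma\backslash G)^{E}\cong\Omega^{1}(X)$,
\[
\big(\overline{\rho}(\partial_{1})f\big)(\Gamma g)=\sqrt{\mu(E)}\,\Big(\textstyle\sum_{h\in(K\sigma^{2})/E}f(\Gamma gh)-\sum_{h\in K/E}f(\Gamma gh)\Big).
\]
As $h$ ranges over $K/E$ the edge $ghe_{0}$ ranges once each over the color-$1$ oriented edges leaving $gv_{0}$, and as $h$ ranges over $(K\sigma^{2})/E$ it ranges once each over those entering $gv_{0}$ --- here one uses $h\sigma v_{0}=v_{0}\iff h\in K\sigma^{-1}=K\sigma^{2}$, valid because $\sigma^{3}$ is a scalar so $\sigma^{-1}=\sigma^{2}$ in $G$, and uniqueness uses $\stab_{G}e_{0}=E$ from \eqref{eq:E-def}. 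Substituting \eqref{eq:1-forms-as-E-stable-vecs} and rewriting the second sum via skew-symmetry of $f$, the right-hand side collapses to $\sum_{w\sim gv_{0}}f([w,gv_{0}])=(\partial_{1}f)(gv_{0})$. The operator $\partial_{2}$ is handled the same way, using $ET=\coprod_{h}hT$ together with the bijection $ET/T\to\{\text{triangles through }e_{0}\}$, $h\mapsto ht_{0}$, onto the $k_{1}$ triangles containing $e_{0}$ in cyclic order (the edges of $t_{0}$ being $e_{0},\sigma e_{0},\sigma^{2}e_{0}$, so $h$ carries one of them to $e_{0}$); this yields $(\overline{\rho}(\partial_{2})f)(\Gamma g)\leftrightarrow\sum_{\tau\ni ge_{0}}f(\tau)=(\partial_{2}f)(ge_{0})$. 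For the coboundaries one may repeat the computation --- now with right-$K$- resp.\ right-$E$-invariance of the input and $TE=E\amalg\sigma E\amalg\sigma^{2}E$ --- obtaining $(\delta_{1}f)([gv_{0},g\sigma v_{0}])=f(g\sigma v_{0})-f(gv_{0})$ and $(\delta_{2}f)(gt_{0})=f(ge_{0})+f(g\sigma e_{0})+f(g\sigma^{2}e_{0})$; or, more economically, observe that the $*$-operation $\eta^{*}(x)=\overline{\eta(x^{-1})}$ on $\mathcal{H}$ gives $\partial_{1}^{*}=\delta_{1}$ and $\partial_{2}^{*}=\delta_{2}$ (using $\one_{K\sigma^{2}}^{*}=\one_{\sigma^{-2}K}=\one_{\sigma K}$, again by $\sigma^{-1}=\sigma^{2}$, and $\one_{ET}^{*}=\one_{TE}$), while $\overline{\rho}$ is a $*$-representation because $L^{2}(\Gamma\backslash G)$ is a unitary $G$-module and $G$ is unimodular; combined with the target-space statement and the simplicial identity $\delta_{i}=\partial_{i}^{*}$, this deduces the $\delta$-statements from the $\partial$-statements.

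\emph{Main obstacle.} No single step is deep, but the bookkeeping is the actual content: one must pair each double coset with the correct set of \emph{oriented} cells --- edges entering versus leaving a vertex, triangles through a fixed edge with a prescribed cyclic orientation --- check that passing to the $\Gamma$-quotient does not merge distinct cells in these finite sums (a routine point, given that $\Gamma$ is torsion-free and hence acts freely on $\mathcal{B}$), and propagate the normalization factors $\sqrt{\mu(E)}$, $\sqrt{\mu(T)}$ from \eqref{eq:1-forms-as-E-stable-vecs} and its $2$-dimensional analogue through every step, so that the two operators agree exactly rather than up to a scalar. The one non-formal ingredient, used throughout, is that $\sigma^{3}\in F^{\times}\!\cdot\mathrm{Id}$, i.e.\ $\sigma^{-1}=\sigma^{2}$ in $G=\mathrm{PGL}_{3}(F)$: this is what makes $K\sigma^{2}$ --- not $K\sigma$ --- the coset parametrizing the edges pointing into $v_{0}$, and what exhibits $\delta_{1}$ as the Hecke-algebra adjoint of $\partial_{1}$.
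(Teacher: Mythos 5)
Your proposal is correct and follows the paper's own proof essentially step for step: both verify the target-space statement via left $K_{i-1}$-invariance of the kernels (using $\sigma K = E\sigma K$ for $\delta_1$), compute $\partial_1$ by decomposing $K$ and $K\sigma^2$ into left $E$-cosets and matching these bijectively with the colour-$1$ oriented edges leaving/entering $gv_0$, compute $\partial_2$ via the bijection $ET/T\leftrightarrow\{\text{triangles through }e_0\}$, and obtain the $\delta_i$ from the $\partial_i$ via the $*$-algebra structure of $\mathcal{H}$ and unitarity of $L^2(\Gamma\backslash G)$. The only cosmetic additions are your explicit check that the kernels are bi-$I$-invariant (the paper takes this for granted) and your explicit invocation of $\sigma^{-1}=\sigma^2$ in $\mathrm{PGL}_3$, which the paper uses implicitly in writing $K\sigma^2=(\sigma K)^{-1}$.
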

\begin{proof}
Both $\partial_{i+1}$ and $\delta_{i}$ map any representation $V$
into $V^{K_{i}}$, since they are constant on right $K_{i}$ cosets
(note that $\sigma K=E\sigma K$). Let $f\in L^{2}\left(\Gamma\backslash G\right)^{E}\cong\Omega^{1}\left(E\right)$,
and let $K=\coprod_{kE\in\nicefrac{K}{E}}kE$. For any $gv_{0}\in X^{0}$
we have
\begin{align*}
\left(\one_{K}f\right)\left(gv_{0}\right) & =\left(\one_{K}f\right)\left(\Gamma g\right)=\int_{G}\one_{K}\left(x\right)\left(xf\right)\left(\Gamma g\right)dx=\int_{K}\left(xf\right)\left(\Gamma g\right)dx\\
 & =\int_{K}f\left(\Gamma gx\right)dx=\sum_{kE\in\nicefrac{K}{E}}\int_{E}f\left(\Gamma gke\right)de=\sum_{kE\in\nicefrac{K}{E}}\int_{E}f\left(\Gamma gk\right)de\\
 & =\mu\left(E\right)\sum_{kE\in\nicefrac{K}{E}}f\left(\Gamma gk\right)=\sqrt{\mu\left(E\right)}\sum_{kE\in\nicefrac{K}{E}}f\left(gke_{0}\right).
\end{align*}
The group $K$ acts transitively on the $q^{2}+q+1$ positive edges
leaving $v_{0}$, so that the positive edge leaving $gv_{0}$ (for
any $g\in G$) are $\left\{ gke_{0}\right\} _{kE\in\nicefrac{K}{E}}$.
Therefore, 
\begin{equation}
\frac{1}{\sqrt{\mu\left(E\right)}}\left(\one_{K}f\right)\left(gv_{0}\right)=\sum_{kE\in\nicefrac{K}{E}}f\left(gke_{0}\right)=\sum_{{\orig e=gv_{0}\atop e\text{ positive}}}f\left(e\right)=-\sum_{{\term e=gv_{0}\atop e\text{ negative}}}f\left(e\right).\label{eq:boundary_1_1}
\end{equation}
In a similar manner, the positive edges with terminus $gv_{0}$ are
$\left\{ gk\sigma^{2}e_{0}\right\} _{k\in K}$, and if $K\sigma^{2}E=\coprod_{k\sigma^{2}E\in\nicefrac{K\sigma^{2}E}{E}}k\sigma^{2}E$
then
\begin{equation}
\begin{aligned}\left(\one_{K\sigma^{2}}f\right)\left(gv_{0}\right) & =\int_{K\sigma^{2}E}f\left(\Gamma gx\right)dx=\sum_{k\sigma^{2}E\in\nicefrac{K\sigma^{2}E}{E}}\int_{E}f\left(\Gamma gk\sigma^{2}e\right)de\\
 & =\sqrt{\mu\left(E\right)}\sum_{k\sigma^{2}E\in\nicefrac{K\sigma^{2}E}{E}}f\left(gk\sigma^{2}e_{0}\right)=\sqrt{\mu\left(E\right)}\sum_{{\term e=gv_{0}\atop e\text{ positive}}}f\left(e\right).
\end{aligned}
\label{eq:boundary_1_2}
\end{equation}
Together with \prettyref{eq:boundary_1_1}, this implies that $\partial_{1}$
from \eqref{eq:co_boundary_defs} indeed act as the simplicial $\partial_{1}$,
justifying the abuse of notation. The reasoning for $\partial_{2}$
is similar, save for the fact that $T\nleq E$ (in fact, $E\cap T=I$).
However, $E$ acts transitively on the triangles containing $e_{0}$,
hence for $f\in L^{2}\left(\Gamma\backslash G\right)^{T}$ 
\begin{align*}
\tfrac{1}{\sqrt{\mu\left(E\right)\mu\left(T\right)}}\left(\one_{ET}f\right)\left(ge_{0}\right) & =\tfrac{1}{\sqrt{\mu\left(T\right)}}\left(\one_{ET}f\right)\left(\Gamma g\right)=\sqrt{\mu\left(T\right)}\sum_{eT\in\nicefrac{ET}{T}}f\left(\Gamma ge\right)\\
 & =\sum_{eT\in\nicefrac{ET}{T}}f\left(get_{0}\right)=\negthickspace\negthickspace\sum_{\tau\in X^{2}\,:\,e_{0}\in\partial\tau}\negthickspace\negthickspace\negthickspace\negthickspace f\left(g\tau\right)=\negthickspace\negthickspace\sum_{\tau\in X^{2}\,:\,ge_{0}\in\partial\tau}\negthickspace\negthickspace\negthickspace\negthickspace f\left(\tau\right),
\end{align*}
agreeing with $\partial_{2}:\Omega^{2}\rightarrow\Omega^{1}$. The
coboundary operators can be analyzed in a similar manner, or as follows:
$\mathcal{H}$ is a $*$-algebra by $\eta^{*}\left(g\right)=\overline{\eta\left(g^{-1}\right)}$,
and a unitary representation $\rho$ of $G$ induces a unitary $\mathcal{H}$-representation,
i.e.\ $\overline{\rho}\left(\eta\right)^{*}=\overline{\rho}\left(\eta^{*}\right)$
(this uses unimodularity of $G$). For $V=L^{2}\left(\Gamma\backslash G\right)$
this gives 
\[
\partial_{1}^{*}=\tfrac{1}{\sqrt{\mu\left(E\right)}}\left(\overline{\one_{\left(K\sigma^{2}\right)^{-1}}}-\overline{\one_{K^{-1}}}\right)=\tfrac{1}{\sqrt{\mu\left(E\right)}}\left(\one_{\sigma K}-\one_{K}\right)
\]
and similarly for $\partial_{2}^{*}$.
\end{proof}
Since $\Gamma$ is cocompact, $L^{2}\left(\Gamma\backslash G\right)$
decomposes as a sum of irreducible unitary representations, $L^{2}\left(\Gamma\backslash G\right)=\bigoplus_{\alpha}W_{\alpha}$,
and $\Omega^{i}\left(X\right)\cong L^{2}\left(\Gamma\backslash G\right)^{K_{i}}=\bigoplus_{\alpha}W_{\alpha}^{K_{i}}\leq\bigoplus_{\alpha}W_{\alpha}^{I}$.
Each $W_{\alpha}^{I}$ is a sub-$\mathcal{H}$-representation, so
that the operators $\partial_{i},\delta_{i}$ decompose with respect
to this sum, and thus the Laplacians as well, giving $\mathrm{\Spec}\Delta_{i}^{\pm}=\bigcup_{\alpha}\Spec\Delta_{i}^{\pm}\big|_{W_{\alpha}^{K_{i}}}$,
with the correct multiplicities. To understand the spectra it is enough
look at the $W_{\alpha}$ which are Iwahori-spherical, namely, contain
$I$-fixed vectors. Furthermore, the isomorphism type of $W_{\alpha}$
already determines the spectrum of $\Delta_{i}^{\pm}$ on $W_{\alpha}^{K_{i}}$.
By \cite[Prop.\ 2.6]{casselman1980unramified}, if $W_{\alpha}^{I}\neq0$
then $W_{\alpha}$ is embeddable in a \emph{principal series representation}
$V_{\mathfrak{z}}$. Namely, there exists $\mathfrak{z}=\left(z_{1},z_{2},z_{3}\right)\in\mathbb{C}^{3}$
(the \emph{Satake parameters}) with $z_{1}z_{2}z_{3}=1$, and
\begin{equation}
V_{\mathfrak{z}}=\mathrm{uInd}_{B}^{G}\chi_{\mathfrak{z}}=\left\{ f:G\rightarrow\mathbb{C}\,\middle|\,f\left(bg\right)=\delta^{-\frac{1}{2}}\left(b\right)\chi_{\mathfrak{z}}\left(b\right)f\left(g\right)\;\forall b\in B\right\} ,\label{eq:UInd}
\end{equation}
where $\chi_{\mathfrak{z}}$ is the character $\chi_{\mathfrak{z}}\left(b\right)=\prod_{i=1}^{3}z_{i}^{\ord_{\pi}b_{ii}}$
of the Borel group $B:=\left\{ \left(\begin{smallmatrix}* & * & *\\
0 & * & *\\
0 & 0 & *
\end{smallmatrix}\right)\in G\right\} $, and $\delta\left(b\right)=\left|b_{33}\right|^{2}/\left|b_{11}\right|^{2}$
is the modular character of $B$. For obvious reasons, it is convenient
to introduce the notation
\[
\widetilde{\chi}_{\mathfrak{z}}\left(b\right)=\delta^{-\frac{1}{2}}\left(b\right)\chi_{\mathfrak{z}}\left(b\right)=\frac{\left|b_{11}\right|}{\left|b_{33}\right|}\prod_{i=1}^{3}z_{i}^{\ord_{\pi}b_{ii}}=\left(\frac{z_{1}}{q}\right)^{\ord_{\pi}b_{11}}z_{2}^{\ord_{\pi}b_{22}}\left(qz_{3}\right)^{\ord_{\pi}b_{33}}.
\]

Having decomposed $L^{2}\left(\Gamma\backslash G\right)=\bigoplus_{\alpha}W_{\alpha}$,
and found a $\Delta_{i}^{\pm}$-eigenform $f\in W_{\alpha}^{K_{i}}\leq\Omega^{i}\left(X\right)$,
we can lift it to a $\Gamma$-periodic eigenform $\tilde{f}\in{}^{\Gamma}\Omega^{i}\left(\mathcal{B}\right)$.
For some $\mathfrak{z}$ we have $\Psi:W_{\alpha}\hookrightarrow V_{\mathfrak{z}}$,
and naturally $\Psi f\in V_{\mathfrak{z}}^{K_{i}}$; since $V_{\mathfrak{z}}$
is defined as a set of complex functions on $G$, we can think of
$\Psi f$ as an $i$-form on $\mathcal{B}$. Thus, both $\tilde{f}$
and $\Psi f$ are $\Delta_{i}^{\pm}$-eigenforms with the same eigenvalue.
However, they are not the same, as $\tilde{f}$ attains finitely many
values and $\Psi f$ infinitely many, in general. Nevertheless, the
matrix coefficients $g\mapsto\left\langle g\widetilde{f},\widetilde{f}\right\rangle $
and $g\mapsto\left\langle g\Psi f,\Psi f\right\rangle $ are the same,
since $\Psi$ is a unitary embedding. When these matrix coefficient
are in $L^{2+\varepsilon}\left(G\right)$ for every $\varepsilon>0$,
and only then, the representation $W_{\alpha}$ is weakly contained
in $L^{2}\left(G\right)$, which implies that the corresponding eigenvalue
is in the $L^{2}$-spectrum of $\Delta_{i}^{\pm}$ on $\mathcal{B}$
(cf.\ \cite{Haagerup1988}). 

\subsection{Analysis of the principal series}

Even though $W_{\alpha}$ is only a subrepresentation of $V_{\mathfrak{z}}$,
it is simpler to consider $\partial_{i},\delta_{i}$ and $\Delta_{i}^{\pm}$
acting on $V_{\mathfrak{z}}$, and later restrict to $W_{\alpha}$.
The Weyl group of $G$ is $S_{3}$ (as permutation matrices), and
$G$ decomposes as
\[
G=BK=\coprod_{w\in A_{3}}BwE=BT\sqcup B\left(1\,2\right)T=\coprod_{w\in S_{3}}BwI.
\]
From $G=BK$ and \eqref{eq:UInd} we see that $\dim V_{\mathfrak{z}}^{K}\leq1$,
and in fact this is an equality since $\chi_{\mathfrak{z}}\big|_{B\cap K}\equiv1$,
hence $f^{K}\left(bk\right):=\widetilde{\chi}_{\mathfrak{z}}\left(b\right)$
is well defined. Similarly, $\dim V_{\mathfrak{z}}^{I}=6$, with basis
$\left\{ f_{w}^{I}\right\} _{w\in S_{3}}$ defined by $f_{w}^{I}\left(w'\right)=\delta_{w,w'}$,
and $\dim V_{\mathfrak{z}}^{E}=3$ with basis $\left\{ f_{w}^{E}\right\} _{w\in A_{3}}$,
where $f_{w}^{E}:=f_{w}^{I}+f_{w\cdot\left(1\,2\right)}^{I}$ satisfies
$f_{w}^{E}\left(w'\right)=\delta_{w,w'}$ for $w,w'\in A_{3}$. Finally,
$\dim V_{\mathfrak{z}}^{T}=2$ with basis $f_{w}^{T}\left(w'\right):=\delta_{w,w'}$
for $w,w'\in\left\{ \left(\,\right),\left(1\,2\right)\right\} $,
which satisfy
\begin{equation}
\begin{aligned}f_{\left(\,\right)}^{T} & =f_{\left(\,\right)}^{I}+\frac{1}{qz_{3}}f_{\left(3\,2\,1\right)}^{I}+\frac{z_{1}}{q}f_{\left(1\,2\,3\right)}^{I}\\
f_{\left(1\,2\right)}^{T} & =f_{\left(1\,2\right)}^{I}+z_{2}f_{\left(2\,3\right)}^{I}+\frac{1}{qz_{3}}f_{\left(1\,3\right)}^{I}\,;
\end{aligned}
\label{eq:T-basis}
\end{equation}
indeed, if $c_{w}$ is the coefficient of $f_{w}^{I}$ in $f_{\left(\,\right)}^{T}$,
then
\[
c_{\left(1\,2\,3\right)}=f_{\left(\,\right)}^{T}\left(\left(1\,2\,3\right)\right)=f_{\left(\,\right)}^{T}\left(\left(\begin{smallmatrix}\pi\\
 & 1\\
 &  & 1
\end{smallmatrix}\right)\sigma^{2}\right)=f_{\left(\,\right)}^{T}\left(\left(\begin{smallmatrix}\pi\\
 & 1\\
 &  & 1
\end{smallmatrix}\right)\right)=\widetilde{\chi}_{\mathfrak{z}}\left(\left(\begin{smallmatrix}\pi\\
 & 1\\
 &  & 1
\end{smallmatrix}\right)\right)=\frac{z_{1}}{q},
\]
and the other coefficients in \prettyref{eq:T-basis} are obtained
similarly.

Let $\Omega_{\mathfrak{z}}^{i}\left(\mathcal{B}\right)$ be the realization
of $V_{\mathfrak{z}}^{K_{i}}$ as a subspace of $\Omega^{i}\left(\mathcal{B}\right)$,
given by the explicit construction \prettyref{eq:UInd}. Any $f\in\Omega_{\mathfrak{z}}^{0}\left(\mathcal{B}\right)$
is determined by its value on $v_{0}$, namely $f=f\left(v_{0}\right)f^{K}$.
Similarly, the value on $e_{0}$, $e_{1}:=\left(3\,2\,1\right)e_{0}$
and $e_{2}:=\left(1\,2\,3\right)e_{0}$ determine a unique element
in $\Omega_{\mathfrak{z}}^{1}\left(\mathcal{B}\right)$, and likewise
for $t_{0}$, $t_{1}:=\left(1\,2\right)t_{0}$ and $\Omega_{\mathfrak{z}}^{2}\left(\mathcal{B}\right)$.
As $S_{3}\leq K$, one can compute the action of $\partial_{i}\big|_{\Omega_{\mathfrak{z}}^{i}\left(\mathcal{B}\right)}$
and $\delta_{i}\big|_{\Omega_{\mathfrak{z}}^{i-1}\left(\mathcal{B}\right)}$
by evaluation on $\mathrm{star}\left(v_{0}\right)$ alone (see Figure
\ref{fig:v0-star}).

\begin{figure}[h]
\begin{centering}
\includegraphics[scale=0.7]{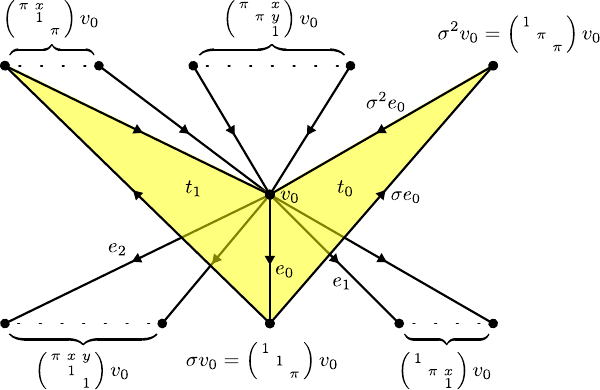}
\par\end{centering}
\caption{\label{fig:v0-star}The star of $v_{0}$ in $\mathcal{B}$.}
\end{figure}

For the basis $\mathfrak{B}^{E}=\left\{ f_{\left(\,\right)}^{E},f_{\left(3\,2\,1\right)}^{E},f_{\left(1\,2\,3\right)}^{E}\right\} $
one has 
\begin{equation}
\left[\delta_{1}\big|_{\Omega_{\mathfrak{z}}^{0}\left(\mathcal{B}\right)}\right]_{\mathfrak{B}^{E}}^{\negmedspace\left\{ \smash{f^{K}}\right\} }=\left(\begin{matrix}qz_{3}-1\\
z_{2}-1\\
\frac{z_{1}}{q}-1
\end{matrix}\right);\label{eq:coboundary_1_satake}
\end{equation}
for example, $\left(\delta_{1}f^{K}\right)\left(e_{1}\right)=f^{K}\left(\left(3\,2\,1\right)\sigma v_{0}\right)-f^{K}\left(v_{0}\right)=f^{K}\left(\left(\begin{smallmatrix}1\\
 & \pi\\
 &  & 1
\end{smallmatrix}\right)v_{0}\right)-1=z_{2}-1$, and $\left(\delta_{1}f^{K}\right)\left(e_{0}\right),\left(\delta_{1}f^{K}\right)\left(e_{2}\right)$
are computed similarly. We turn to $\partial_{1}$. The positive edges
with origin $v_{0}$ are $e_{0}$, $\left(\begin{smallmatrix}1\\
 & 1 & x\\
 &  & 1
\end{smallmatrix}\right)e_{1}$ for $x\in\mathbb{F}_{q}=\nicefrac{\mathcal{O}}{\pi\mathcal{O}}$,
and $\left(\begin{smallmatrix}1 & x & y\\
 & 1\\
 &  & 1
\end{smallmatrix}\right)e_{2}$ with $x,y\in\mathbb{F}_{q}$. As $\widetilde{\chi}_{\mathfrak{z}}$
is trivial on upper-triangular unipotent matrices, \prettyref{eq:boundary_1_1}
implies that for $f\in\Omega_{\mathfrak{z}}^{1}\left(\mathcal{B}\right)$
we have $\left(\mu\left(E\right)^{-\frac{1}{2}}\one_{K}f\right)\left(v_{0}\right)=f\left(e_{0}\right)+qf\left(e_{1}\right)+q^{2}f\left(e_{2}\right)$.
The positive edges entering $v_{0}$ are 
\[
\left[\sigma^{2}v_{0},v_{0}\right]=\sigma^{2}e_{0}=\left(\begin{smallmatrix} &  & 1\\
\pi\\
 & \pi
\end{smallmatrix}\right)e_{0}=\left(\begin{smallmatrix}1\\
 & \pi\\
 &  & \pi
\end{smallmatrix}\right)\left(1\,2\,3\right)e_{0}=\left(\begin{smallmatrix}1\\
 & \pi\\
 &  & \pi
\end{smallmatrix}\right)e_{2},
\]
and similarly $\left(\begin{smallmatrix}\pi & x\\
 & 1\\
 &  & \pi
\end{smallmatrix}\right)e_{1}$ and $\left(\begin{smallmatrix}\pi &  & x\\
 & \pi & y\\
 &  & 1
\end{smallmatrix}\right)e_{0}$ ($x,y\in\mathbb{F}_{q}$). By \prettyref{eq:boundary_1_2},
\begin{align*}
\left(\tfrac{1}{\sqrt{\mu\left(E\right)}}\one_{K\sigma^{2}E}f\right)\left(v_{0}\right) & =f\left(\left(\begin{smallmatrix}1\\
 & \pi\\
 &  & \pi
\end{smallmatrix}\right)e_{2}\right)+\sum_{x\in\mathbb{F}_{q}}f\left(\left(\begin{smallmatrix}\pi & x\\
 & 1\\
 &  & \pi
\end{smallmatrix}\right)e_{1}\right)+\sum_{x,y\in\mathbb{F}_{q}}f\left(\left(\begin{smallmatrix}\pi &  & x\\
 & \pi & y\\
 &  & 1
\end{smallmatrix}\right)e_{0}\right)\\
 & =z_{2}qz_{3}f\left(e_{2}\right)+q\cdot z_{1}z_{3}f\left(e_{1}\right)+q^{2}\cdot\frac{z_{1}}{q}z_{2}f\left(e_{0}\right),
\end{align*}
and in total (see \prettyref{eq:co_boundary_defs})
\begin{equation}
\left[\partial_{1}\big|_{\Omega_{\mathfrak{z}}^{1}\left(\mathcal{B}\right)}\right]_{\negmedspace\left\{ \smash{f^{K}}\right\} }^{\mathfrak{B}^{E}}=\left(\begin{matrix}\frac{q}{z_{3}}-1 & \frac{q}{z_{2}}-q & \frac{q}{z_{1}}-q^{2}\end{matrix}\right).\label{eq:boundary_1_satake}
\end{equation}
As $\Delta_{0}^{+}=\partial_{1}\delta_{1}$ and $\Delta_{1}^{-}=\delta_{1}\partial_{1}$,
we can now compute explicitly their action on the $\mathfrak{z}$-principal
series. Denoting $\widetilde{\mathfrak{z}}=\sum_{i=1}^{3}\left(z_{i}+z_{i}^{-1}\right)$,
we have by \prettyref{eq:coboundary_1_satake} and \prettyref{eq:boundary_1_satake}
\begin{align}
\Delta_{0}^{+}\big|_{\Omega_{\mathfrak{z}}^{0}\left(\mathcal{B}\right)} & =\left(\lambda^{K}\right):=\left(k_{0}-q\widetilde{\mathfrak{z}}\right),\label{eq:principle-series-e.v.-vertices}\\
\left[\Delta_{1}^{-}\big|_{\Omega_{\mathfrak{z}}^{1}\left(\mathcal{B}\right)}\right]_{\mathfrak{B}^{E}} & =\left(\begin{smallmatrix}{q}^{2}-qz_{{3}}-{\frac{q}{z_{{3}}}}+1 & -{q}^{2}z_{{3}}+{\frac{{q}^{2}z_{{3}}}{z_{{2}}}}+q-{\frac{q}{z_{{2}}}} & -{q}^{3}z_{{3}}+{\frac{{q}^{2}z_{{3}}}{z_{{1}}}}+{q}^{2}-{\frac{q}{z_{{1}}}}\\
\noalign{\smallskip}{\frac{qz_{{2}}}{z_{{3}}}}-z_{{2}}-{\frac{q}{z_{{3}}}}+1 & -qz_{{2}}+2\,q-{\frac{q}{z_{{2}}}} & -{q}^{2}z_{{2}}+{q}^{2}+{\frac{qz_{{2}}}{z_{{1}}}}-{\frac{q}{z_{{1}}}}\\
\noalign{\smallskip}-{\frac{q}{z_{{3}}}}-{\frac{z_{{1}}}{q}}+{\frac{z_{{1}}}{z_{{3}}}}+1 & q-z_{{1}}-{\frac{q}{z_{{2}}}}+{\frac{z_{{1}}}{z_{{2}}}} & {q}^{2}-qz_{{1}}-{\frac{q}{z_{{1}}}}+1
\end{smallmatrix}\right).\nonumber 
\end{align}
Observe that $\Delta_{0}^{+}$ agrees with the computation of the
spectrum of the Hecke operators in \cite{macdonald1979symmetric,Cartwright2002,li2004ramanujan,Lubotzky2005a},
as $\Delta_{0}^{+}=k_{0}\cdot I-\sum_{i=1}^{d-1}A_{i}$, where $A_{i}$
is the $i$-th Hecke operator on $\mathcal{B}_{d}$ (loc.\ cit.).
In effect, $\Delta_{1}^{-}$ can also be understood without the machinery
above, as it has the eigenvalue $\lambda^{K}$ corresponding to $\delta_{1}\Omega_{\mathfrak{z}}^{0}\left(\mathcal{B}\right)$
(since $\Delta_{1}^{-}\delta_{1}f^{K}=\delta_{1}\Delta_{0}^{+}f^{K}=\lambda^{K}\delta_{1}f^{K}$),
and two zeros (which come from $\partial_{2}\Omega_{\mathfrak{z}}^{2}\left(\mathcal{B}\right)$).
However, this machinery allows us to compute as easily the edge/triangle
spectrum: for $f\in\Omega_{\mathfrak{z}}^{1}\left(\mathcal{B}\right)$,
one has
\begin{align*}
\left(\delta_{2}f\right)\left(t_{0}\right) & =\sum\nolimits _{i=0}^{2}f\left(\sigma^{i}e_{0}\right)=f\left(e_{0}\right)+f\left(\left(\begin{smallmatrix}1\\
 & 1\\
 &  & \pi
\end{smallmatrix}\right)\left(3\,2\,1\right)e_{0}\right)+f\left(\left(\begin{smallmatrix}1\\
 & \pi\\
 &  & \pi
\end{smallmatrix}\right)\left(1\,2\,3\right)e_{0}\right)\\
 & =f\left(e_{0}\right)+qz_{3}f\left(e_{1}\right)+z_{2}qz_{3}f\left(e_{2}\right)\\
\left(\delta_{2}f\right)\left(t_{1}\right) & =\sum\nolimits _{i=0}^{2}f\left(\left(1\,2\right)\sigma^{i}e_{0}\right)=f\left(e_{0}\right)+f\left(\left(\begin{smallmatrix}1\\
 & 1\\
 &  & \pi
\end{smallmatrix}\right)\left(1\,2\,3\right)e_{0}\right)+f\left(\left(\begin{smallmatrix}\pi\\
 & 1\\
 &  & \pi
\end{smallmatrix}\right)\left(3\,2\,1\right)e_{0}\right)\\
 & =f\left(e_{0}\right)+qz_{3}f\left(e_{2}\right)+z_{1}z_{3}f\left(e_{1}\right),
\end{align*}
which gives $\left[\delta_{2}\big|_{\Omega_{\mathfrak{z}}^{1}\left(\mathcal{B}\right)}\right]_{\mathfrak{B}^{T}}^{\mathfrak{B}^{E}}=\left(\begin{matrix}1 & qz_{3} & qz_{2}z_{3}\\
1 & z_{1}z_{3} & qz_{3}
\end{matrix}\right)$, where $\mathfrak{B}^{T}$ is the ordered basis $f_{\left(\,\right)}^{T},f_{\left(1\,2\right)}^{T}$.
The triangles containing $e_{0}$ are obtained by adjoining $\sigma^{2}v_{0}$
(which gives $t_{0}$) and $\left(\begin{smallmatrix}\pi & x\\
 & 1\\
 &  & \pi
\end{smallmatrix}\right)v_{0}$ ($x\in\mathbb{F}_{q}$), giving $\left(\begin{smallmatrix}1 & x\\
 & 1\\
 &  & 1
\end{smallmatrix}\right)t_{1}$. This yields $\left(\partial_{2}f\right)\left(e_{0}\right)=f\left(t_{0}\right)+qf\left(t_{1}\right)$,
but for $e_{1},e_{2}$ we need to work a little harder, and use \prettyref{eq:T-basis}:
\begin{align*}
\left(\partial_{2}f\right)\left(e_{1}\right) & =\left(\partial_{2}f\right)\left(\left(3\,2\,1\right)e_{0}\right)=f\left(\left(3\,2\,1\right)t_{0}\right)+\sum_{x\in\mathbb{F}_{q}}f\left(\left(3\,2\,1\right)\left(\begin{smallmatrix}1 & x\\
 & 1\\
 &  & 1
\end{smallmatrix}\right)t_{1}\right)\\
 & =\frac{1}{qz_{3}}f\left(t_{0}\right)+f\left(\left(3\,2\,1\right)t_{1}\right)+\sum_{x\in\mathbb{F}_{q}^{\times}}f\left(\left(3\,2\,1\right)\left(\begin{smallmatrix}1 & x\\
 & 1\\
 &  & 1
\end{smallmatrix}\right)\left(1\,2\right)t_{0}\right)\\
 & =\frac{1}{qz_{3}}f\left(t_{0}\right)+f\left(\left(2\,3\right)t_{0}\right)+\sum_{x\in\mathbb{F}_{q}^{\times}}f\left(\left(\begin{smallmatrix}-\frac{1}{x} &  & 1\\
 & 1\\
 &  & x
\end{smallmatrix}\right)\left(3\,2\,1\right)\left(\begin{smallmatrix}1 & \frac{1}{x}\\
 & 1\\
 &  & 1
\end{smallmatrix}\right)t_{0}\right)\\
 & =\frac{1}{qz_{3}}f\left(t_{0}\right)+z_{2}f\left(t_{1}\right)+\sum_{x\in\mathbb{F}_{q}^{\times}}f\left(\left(3\,2\,1\right)t_{0}\right)=\frac{1}{z_{3}}f\left(t_{0}\right)+z_{2}f\left(t_{1}\right)\\
\left(\partial_{2}f\right)\left(e_{2}\right) & =\frac{z_{1}}{q}f\left(t_{0}\right)+\sum_{x\in\mathbb{F}_{q}}f\left(\left(\begin{smallmatrix}1\\
 & 1 & x\\
 &  & 1
\end{smallmatrix}\right)\left(1\,3\right)t_{0}\right)=\frac{z_{1}}{q}f\left(t_{0}\right)+\frac{1}{z_{3}}f\left(t_{1}\right),
\end{align*}
 so that $\left[\partial_{2}\big|_{\Omega_{\mathfrak{z}}^{2}\left(\mathcal{B}\right)}\right]_{\mathfrak{B}^{E}}^{\mathfrak{B}^{T}}=\left(\begin{matrix}1 & q\\
\nicefrac{1}{z_{3}} & z_{2}\\
\nicefrac{z_{1}}{q} & \nicefrac{1}{z_{3}}
\end{matrix}\right)$, giving 
\begin{align*}
\left[\Delta_{1}^{+}\big|_{\Omega_{\mathfrak{z}}^{1}\left(\mathcal{B}\right)}\right]_{\mathfrak{B}^{E}} & =\left(\begin{array}{ccc}
q+1 & \frac{q}{z_{2}}+qz_{{3}} & {q}^{2}z_{{3}}+\frac{q}{z_{1}}\\
\noalign{\smallskip}{z_{{2}}+\frac{1}{z_{{3}}}} & q+1 & \frac{q}{z_{1}}+qz_{{2}}\\
\noalign{\smallskip}{\frac{z_{1}}{q}+\frac{1}{z_{3}}} & \frac{1}{z_{2}}+z_{{1}} & q+1
\end{array}\right)\\
\left[\Delta_{2}^{-}\big|_{\Omega_{\mathfrak{z}}^{2}\left(\mathcal{B}\right)}\right]_{\mathfrak{B}^{T}} & =\left(\begin{array}{cc}
q+2 & \frac{q}{z_{1}}+qz_{{2}}+q\\
\noalign{\smallskip}\frac{1}{z_{2}}+z_{{1}}+1 & 2q+1
\end{array}\right).
\end{align*}
Recalling that $\lambda^{K}=k_{0}-q\widetilde{\mathfrak{z}}=2\left(q^{2}+q+1\right)-q\left(\sum z_{i}+z_{i}^{-1}\right)$,
\begin{equation}
\begin{aligned}\Spec\Delta_{1}^{+}\big|_{\Omega_{\mathfrak{z}}^{1}\left(\mathcal{B}\right)} & =\left\{ \lambda_{0}^{E},\lambda_{\pm}^{E}\right\} :=\left\{ 0,\tfrac{3}{2}\left(q+1\right)\pm\tfrac{1}{2}\sqrt{\left(q+1\right)^{2}+4q\left(2+\widetilde{\mathfrak{z}}\right)}\right\} \\
 & =\left\{ 0,\tfrac{3k_{1}}{2}\pm\sqrt{\left(\tfrac{3k_{1}}{2}\right)^{2}-\lambda^{K}}\right\} ,
\end{aligned}
\label{eq:principle-series-e.v.-edges}
\end{equation}
and again $\Spec\Delta_{2}^{-}\big|_{\Omega_{\mathfrak{z}}^{2}\left(\mathcal{B}\right)}=\left\{ \lambda_{\pm}^{E}\right\} $
as we have argued for $\Delta_{1}^{-}$. For $\Delta_{1}^{+}$, $\lambda_{0}^{E}=0$
is obtained on $\delta_{1}f^{K}$ (whose $f_{w}^{E}$ coefficients
were computed in \prettyref{eq:coboundary_1_satake}), and $\lambda_{\pm}^{E}$
are obtained on
\begin{align*}
f_{\pm}^{E} & =\left(\begin{array}{c}
2\left(z_{2}^{-2}+{\frac{z_{{1}}}{z_{{2}}}}\right){q}^{2}-2\left(z_{{3}}+1\right)q\\
\noalign{\smallskip}1-{q}^{2}z_{{1}}+q\left(z_{{1}}+\frac{2}{z_{2}}-\frac{2}{z_{3}}-1\right)\pm\left(qz_{{1}}-1\right)\sqrt{9k_{1}^{2}-4\lambda^{K}}\\
\noalign{\smallskip}qz_{1}\left(z_{2}^{-1}+2{z_{{1}}}+1\right)-{\frac{z_{{1}}}{z_{{2}}}}-z_{{1}}-\frac{2}{z_{2}}\pm\left(-z_{{1}}+{\frac{z_{{1}}}{z_{{2}}}}\right)\sqrt{9k_{1}^{2}-4\lambda^{K}}
\end{array}\right)^{T}\cdot\left(\begin{matrix}f_{\left(\,\right)}^{E}\\
\noalign{\medskip}f_{\left(3\,2\,1\right)}^{E}\\
\noalign{\medskip}f_{\left(1\,2\,3\right)}^{E}
\end{matrix}\right)\\
 & =2q\left(1+z_{2}+\frac{1}{z_{1}}\right)\partial_{2}f_{\left(\,\right)}^{T}+\left(q-1\pm\sqrt{9k_{1}^{2}-4\lambda^{K}}\right)\partial_{2}f_{\left(1\,2\right)}^{T}.
\end{align*}

\subsection{\label{sec:Unitary-Iwahori-spherical-repres}Unitary Iwahori-spherical
representations}

In general, an irreducible Iwahori-spherical representation is only
a subrepresentation of $V_{\mathfrak{z}}$. Denote by $W_{\mathfrak{z}}$
this subrepresentation (there is only one such). Tadic \cite{tadic1986classification}
classified the Satake parameters for which the representation $W_{\mathfrak{z}}$
admits a unitary structure, and in \cite{kang2010zeta} the possible
$\mathfrak{z}$ for $PGL_{3}\left(F\right)$ are listed, and a basis
for $W_{\mathfrak{z}}\leq V_{\mathfrak{z}}$ is computed explicitly,
using results from \cite{borel1976admissible,zelevinsky1980induced}.
It turns out that a unitary $E$-spherical $W_{\mathfrak{z}}$ is
of one of the following types:
\begin{enumerate}[label=(\alph{enumi})]
\item \label{enu:prin-unit}$\left|z_{i}\right|=1$ for $i=1,2,3$. In
this case $W_{\mathfrak{z}}=V_{\mathfrak{z}}$, and $\widetilde{\mathfrak{z}}\in\left[-3,6\right]$
gives $\lambda^{K}\in\left[k_{0}-6q,k_{0}+3q\right]$ and $\lambda_{\pm}^{E}\in\mathcal{I}_{\pm}$
(see \prettyref{eq:principle-series-e.v.-edges} and \prettyref{eq:good-bad-interval}).
\item \label{enu:bad-prin}$\mathfrak{z}=\left(c^{-2},cq^{a},cq^{-a}\right)$
for some $\left|c\right|=1$ and $0<a<\frac{1}{2}$. Here too $W_{\mathfrak{z}}=V_{\mathfrak{z}}$.
\item \label{enu:Li-e}$\mathfrak{z}=\left(\frac{c}{\sqrt{q}},c\sqrt{q},c^{-2}\right)$
for some $\left|c\right|=1$. In this case $W_{\mathfrak{z}}^{E}$
is one-dimensional, and spanned by $f_{-}^{E}$, which is proportional
to $qf_{\left(3\,2\,1\right)}^{E}-f_{\left(1\,2\,3\right)}^{E}$.
It corresponds to 
\begin{align*}
\lambda_{-}^{E} & =\frac{1}{2}\left(3k_{1}-\sqrt{k_{1}^{2}+8q+4q\left(\frac{c}{\sqrt{q}}+\overline{c}\sqrt{q}+c\sqrt{q}+\frac{\overline{c}}{q}+c^{-2}+c^{2}\right)}\right)\\
 & =\frac{1}{2}\left(3k_{1}-\sqrt{q^{2}+8q\sqrt{q}\Re\left(c\right)+2q+16q\Re\left(c\right)^{2}+1+8\sqrt{q}\Re\left(c\right)}\right)\\
 & =\frac{1}{2}\left(3k_{1}-\left(q+4\sqrt{q}\Re\left(c\right)+1\right)\right)=k_{1}-2\sqrt{q}\Re\left(c\right)
\end{align*}
which lies in $\mathcal{I}$ (see \prettyref{eq:excellent-interval}).
As $f_{-}^{E}$ is not $K$-fixed, $W_{\mathfrak{z}}^{K}=0$.
\item \label{enu:Li-d}$\mathfrak{z}=\left(c\sqrt{q},\frac{c}{\sqrt{q}},c^{-2}\right)$
for some $\left|c\right|=1$. Here $W_{\mathfrak{z}}^{E}=\left\langle f_{0}^{E},f_{+}^{E}\right\rangle $,
where $f_{+}^{E}$ is proportional to $\left(q+1\right)f_{\left(\,\right)}^{E}+\left(c^{2}+\frac{c}{\sqrt{q}}\right)\left(f_{\left(3\,2\,1\right)}^{E}+f_{\left(1\,2\,3\right)}^{E}\right)$,
and $\lambda_{+}^{E}=2k_{1}+2\sqrt{q}\Re\left(c\right)$ similarly
to the computation in type \prettyref{enu:Li-e}. This time $f^{K}$
is in $W_{\mathfrak{z}}^{E}$, and corresponds to $\lambda^{K}=k_{0}-2q\Re\left(\frac{\left(q+1\right)}{\sqrt{q}}c+c^{2}\right)$.
\item \label{enu:triv}$\mathfrak{z}=\left(q,1,{\scriptscriptstyle \frac{1}{q}}\right)$;
$W_{\mathfrak{z}}$ is the trivial representation $\rho:G\rightarrow\mathbb{C}^{\times}$,
and $W_{\mathfrak{z}}^{E}=W_{\mathfrak{z}}^{K}$ are spanned by $f^{K}=f_{+}^{E}$.
Since $f^{K}$ is constant and $f_{+}^{E}$ is a disorientation we
have $\lambda^{K}=0$ and $\lambda_{+}^{E}=3k_{1}$ (alternatively,
use \prettyref{eq:principle-series-e.v.-vertices} and \prettyref{eq:principle-series-e.v.-edges}).
\item \label{enu:color}$\mathfrak{z}=\left(\omega q,\omega,{\scriptscriptstyle \frac{\omega}{q}}\right)$
where $\omega=e^{\pm\frac{2\pi i}{3}}$; $W_{\mathfrak{z}}$ is the
one-dimensional representation $\rho\left(g\right)=\omega^{\tau\left(g\right)}$,
and $W_{\mathfrak{z}}^{K}=W_{\mathfrak{z}}^{E}=\left\langle f^{K}\right\rangle =\left\langle f_{0}^{E}\right\rangle $,
giving $\lambda^{K}=\frac{3k_{0}}{2}$.
\end{enumerate}
Apart from these there is the Steinberg $\mathrm{\left(Stn\right)}$
representation $\mathfrak{z}=\left({\scriptscriptstyle \frac{1}{q}},1,q\right)$.
It is not $E$-spherical, and $W^{T}$ is spanned by $f_{0}^{T}=qf_{\left(\,\right)}^{T}-f_{\left(1\,2\right)}^{T}$,
which is always in $\ker\partial_{2}=\ker\Delta_{2}^{-}$. (In \cite{kang2010zeta}
the twisted Steinberg representations $\mathfrak{z}=\left({\scriptscriptstyle \frac{\omega}{q}},\omega,\omega q\right)$
are also considered, but they do not contribute to $\Omega^{*}$ as
they have no $K$, $E$ or $T$-fixed vectors.)

Let $X=\Gamma\backslash\mathcal{B}$ be a non-tripartite Ramanujan
complex with $L^{2}\left(\Gamma\backslash G\right)\cong\bigoplus_{i}W_{\mathfrak{z_{i}}}$,
and denote by $N_{\left(t\right)}$ the number of $W_{\mathfrak{z}_{i}}$
of type $\left(t\right)$. These are computed in \cite{kang2010zeta}
for the tripartite case, and our arguments are similar. By the Ramanujan
assumption every Iwahori-spherical $W_{\mathfrak{z}_{i}}$ is either
tempered, which are the types \prettyref{enu:prin-unit}, \prettyref{enu:Li-e},
and $\mathrm{\left(Stn\right)}$, or finite-dimensional (types \prettyref{enu:triv},
\prettyref{enu:color}), so that $N_{\ref{enu:bad-prin}}=N_{\ref{enu:Li-d}}=0$.
The trivial representation \prettyref{enu:triv} always appears once
in $L^{2}\left(\Gamma\backslash G\right)$ as the constant functions,
so that $N_{\ref{enu:triv}}=1$.\footnote{This explains why $3k_{1}$ always appear in $\Spec\Delta_{1}^{+}$,
unlike the graph case, where $2k_{0}\in\Spec\Delta_{0}^{+}$ only
for bipartite quotients of $\mathcal{B}_{2}$.} Type \prettyref{enu:color} corresponds to $f\in L^{2}\left(\Gamma\backslash G\right)$
satisfying $f\left(\Gamma g\right)=\left(gf\right)\left(\Gamma\right)=\omega^{\tau\left(g\right)}f\left(\Gamma\right)$,
which is unique up to scaling, and well defined iff $\Gamma\leq\ker\tau$,
i.e.\ $X$ is tripartite. Therefore, $N_{\ref{enu:color}}=0$ and
\begin{align*}
n & =\dim\Omega^{0}\left(X\right)=\sum\nolimits _{i}\dim W_{\mathfrak{z}_{i}}^{K}=N_{\ref{enu:prin-unit}}+N_{\ref{enu:triv}}+N_{\ref{enu:color}}\\
\frac{nk_{0}}{2} & =\dim\Omega^{1}\left(X\right)=\sum\nolimits _{i}\dim W_{\mathfrak{z}_{i}}^{E}=3N_{\ref{enu:prin-unit}}+N_{\ref{enu:Li-e}}+N_{\ref{enu:triv}}+N_{\ref{enu:color}}
\end{align*}
together imply $N_{\ref{enu:prin-unit}}=n-1$ and $N_{\ref{enu:Li-e}}=n\left(q^{2}+q-2\right)+2$.
This is summarized in Table \ref{tab:rep-info}, together with the
tripartite case, and this also completes the proof of \prettyref{thm:ram-triangle-spec}.
For completeness, Table \ref{tab:rep-info} also shows $W^{T}$ for
each type. From $\frac{nk_{0}k_{1}}{6}=\dim\Omega^{2}\left(X\right)=2N_{\ref{enu:prin-unit}}+N_{\ref{enu:Li-e}}+N_{\ref{enu:triv}}+N_{\left(Stn\right)}$
one has $N_{\left(Stn\right)}=\sum_{i=-1}^{2}\left(-1\right)^{i}\left|X^{i}\right|=\widetilde{\chi}\left(X\right)$,
the reduced Euler characteristic of $X$.

\begin{table}[h]
\begin{centering}
\begin{tabular}{|c|c|c|c|c|c|c|c|c|}
\hline 
Type &  & $W^{K}$ & $\Delta_{0}^{+}$ e.v. & $W^{E}$ & $\Delta_{1}^{+}$ e.v. & $W^{T}$ & ${\mathrm{mult.}\atop \Gamma\leq\ker\tau}$ & ${\mathrm{mult.}\atop \Gamma\nleq\ker\tau}$\tabularnewline
\hline 
\hline 
\prettyref{enu:prin-unit} & {\small{}tempered} & $f^{K}$ & $k_{0}+q\widetilde{\mathfrak{z}}$ & $f_{0}^{E},f_{\pm}^{E}$ & $0,\tfrac{3k_{1}}{2}\!\pm\!\sqrt{\tfrac{9k_{1}^{2}}{4}-\lambda^{K}}$ & $\delta_{1}f_{\pm}^{E}$ & $n-3$ & $n-1$\tabularnewline
\hline 
\prettyref{enu:bad-prin} &  & $f^{K}$ & $k_{0}+q\widetilde{\mathfrak{z}}$ & $f_{0}^{E},f_{\pm}^{E}$ & $0,\tfrac{3k_{1}}{2}\!\pm\!\sqrt{\tfrac{9k_{1}^{2}}{4}-\lambda^{K}}$ & $\delta_{1}f_{\pm}^{E}$ & 0 & 0\tabularnewline
\hline 
\prettyref{enu:Li-e} & {\small{}tempered} & $0$ & - & $f_{-}^{E}$ & $k_{1}-2\sqrt{q}\Re\left(c\right)$ & $\delta_{1}f_{-}^{E}$ & ${nq^{2}+nq\atop -2n+6}$ & ${nq^{2}+nq\atop -2n+2}$\tabularnewline
\hline 
\prettyref{enu:Li-d} &  & $f^{K}$ & $k_{0}+q\widetilde{\mathfrak{z}}$ & $f_{0}^{E},f_{+}^{E}$ & $0,2k_{1}+2\sqrt{q}\Re\left(c\right)$ & $\delta_{1}f_{+}^{E}$ & 0 & 0\tabularnewline
\hline 
\prettyref{enu:triv} & {\small{}trivial} & $f^{K}$ & 0 & $f_{+}^{E}$ & $3k_{1}$ & $\delta_{1}f_{+}^{E}$ & 1 & 1\tabularnewline
\hline 
\prettyref{enu:color} & {\small{}fin.\ dim.} & $f^{K}$ & $\frac{3k_{0}}{2}$ & $f_{0}^{E}$ & 0 & 0 & 2 & 0\tabularnewline
\hline 
$\mathrm{\left(Stn\right)}$ & {\small{}tempered} & 0 & - & 0 & - & $f_{0}^{T}$ & $\widetilde{\chi}\left(X\right)$ & $\widetilde{\chi}\left(X\right)$\tabularnewline
\hline 
\end{tabular}
\par\end{centering}
\caption{\label{tab:rep-info}The representations appearing in $L^{2}\left(\Gamma\backslash G\right)$,
with the corresponding Laplacian eigenvalues, and the multiplicity
of appearance in the tripartite and non-tripartite Ramanujan cases.}
\end{table}

\section{\label{sec:Combinatorial-expansion}Combinatorial expansion}

\subsection{\label{sec:isop-expansion}Isoperimetric expansion}

The nontrivial spectrum of $\Delta_{0}^{+}$ on a non-tripartite Ramanujan
complex is highly concentrated, lying in a $k_{0}\pm O\left(\sqrt{k_{0}}\right)$
strip. The nontrivial $\Delta_{1}^{+}$-spectrum on $1$-cycles is
``almost concentrated'': there are $\approx nq^{2}$ eigenvalues
in a $k_{1}\pm O\left(\sqrt{k_{1}}\right)$ strip, but also $n-1$
eigenvalues at $2k_{1}\pm O\left(1\right)$ (and the trivial eigenvalue
$3k_{1}$). Nevertheless, having a concentrated vertex spectrum, and
edge spectrum bounded away from zero is enough to prove the Cheeger-type
inequality in Theorem \ref{thm:intro-cheeger-mixing}\eqref{enu:(Isoperimetry)-If-}:
For a partition of the vertices into sets $A_{0},A_{1},A_{2}$ of
sizes at least $\vartheta n,$ 
\[
\frac{\left|X\left(A_{0},A_{1},A_{2}\right)\right|n^{2}}{\left|A_{0}\right|\left|A_{1}\right|\left|A_{2}\right|}\geq2q^{3}-4q^{2.5}-C\cdot\frac{q^{2}}{\vartheta^{3}}.
\]

\begin{rems*}
\begin{enumerate}
\item This should be compared to the pseudo-random expectation: $X$ has
$\frac{1}{3!}$$nk_{0}k_{1}$ triangles, so its triangle density is
indeed $\frac{1}{3}n\left(q^{2}+q+1\right)\left(q+1\right)/{n \choose 3}\approx\frac{2q^{3}}{n^{2}}$. 
\item The restriction $\left|A_{i}\right|\geq\vartheta n$ is essential:
If $f\left(n\right)$ is any sub-linear function, one can take $A_{0}\subseteq X^{0}$
to be any set of size $f\left(n\right)$, $A_{1}$ some set containing
$\partial A_{0}=\left\{ v\,\middle|\,\mathrm{dist}\left(v,A_{0}\right)=1\right\} $,
and $A_{2}$ the rest of the vertices. Assuming $n$ is large enough
one has $\left|A_{0}\right|,\left|A_{1}\right|,\left|A_{2}\right|\geq f\left(n\right)$,
and $T\left(A_{0},A_{1},A_{2}\right)=\varnothing$.
\item Another Cheeger constant for complexes was suggested in \cite{parzanchevski2012isoperimetric},
and studied in \cite{Gundert2014}. However, it is trivial for clique
complexes, so we do not address it here.
\end{enumerate}
\end{rems*}

We prove Theorem \ref{thm:intro-cheeger-mixing}\eqref{enu:(Isoperimetry)-If-}
as part of Theorem \ref{thm:cheeger-gen-dim}, which applies to general
dimension.

\begin{proof}[Proof of Theorem \ref{thm:cheeger-gen-dim}]
For $f\in\Omega^{d-1}\left(X\right)$ defined by 
\[
f\left(\left[\sigma_{0}\:\sigma_{1}\:\ldots\:\sigma_{{d-1}}\right]\right)=\begin{cases}
\sgn\pi\left|A_{\pi\left({d}\right)}\right| & \exists\pi\in\mathrm{Sym}_{\left\{ 0\ldots{d}\right\} }\:\mathrm{with}\:\sigma_{i}\in A_{\pi\left(i\right)}\:\mathrm{for}\:0\leq i\leq{d-1}\\
0 & \mathrm{else,\:i.e.\:}\exists k,i\neq j\:\mathrm{with}\:\sigma_{i},\sigma_{j}\in A_{k},
\end{cases}
\]
it is shown in \cite[§4.1]{parzanchevski2012isoperimetric} that $\left\Vert \delta f\right\Vert ^{2}=\left|X\left(A_{0},\ldots,A_{{d}}\right)\right|n^{2}$.\footnote{While \cite{parzanchevski2012isoperimetric} assumes that $X$ has
a complete skeleton, this claim does not use this assumption.} For $f_{B}=\mathbb{P}_{B^{d-1}}f$ and $f_{Z}=\mathbb{P}_{Z_{d-1}}f$,
this gives
\begin{align*}
\left|X\left(A_{0},\ldots,A_{{d}}\right)\right|n^{2} & =\left\Vert \delta f\right\Vert ^{2}=\left\Vert \delta f_{Z}\right\Vert ^{2}=\left\langle \Delta_{d-1}^{+}f_{Z},f_{Z}\right\rangle \\
 & \geq\lambda_{d-1}\left\Vert f_{Z}\right\Vert ^{2}=\lambda_{d-1}\left(\left\Vert f\right\Vert ^{2}-\left\Vert f_{B}\right\Vert ^{2}\right).
\end{align*}
Denoting $\mathcal{K}=k_{0}\cdot\ldots\cdot k_{d-2}$ and $\mathcal{E}=\frac{\mu_{0}}{k_{0}}+\ldots+\frac{\mu_{d-2}}{k_{d-2}}$,
we have by \cite[Thm.\ 1.3]{Parzanchevski2013}
\begin{align*}
\left\Vert f\right\Vert ^{2} & =\sum_{i=0}^{d}\left|X\left(A_{0},\ldots,\widehat{A_{i}},\ldots,A_{d}\right)\right|\left|A_{i}\right|^{2}\geq\sum_{i=0}^{d}\left[\frac{\mathcal{K}}{n^{d-1}}\smash{\prod_{j\neq i}}\left|A_{j}\right|-c_{d-1}\mathcal{K}\mathcal{E}\max_{j\neq i}\left|A_{j}\right|\right]\left|A_{i}\right|^{2}\\
 & \geq\frac{\mathcal{K}}{n^{d-2}}\left(\prod_{i=0}^{d}\left|A_{i}\right|\right)-\left(d+1\right)c_{d-1}\mathcal{K}\mathcal{E}n^{3}\geq\mathcal{K}\left(n^{2-d}\prod_{i=0}^{d}\left|A_{i}\right|-\left(d+1\right)c_{d-1}\mathcal{E}n^{3}\right).
\end{align*}
Turning to $f_{B}$, let us denote $\mathfrak{D}=k_{0}\mathbb{P}_{B^{d-1}}-\Delta_{d-1}^{-}.$
Any linear maps $T:V\rightarrow W$ and $S:W\rightarrow V$ satisfy
$\left(\Spec TS\right)\backslash\left\{ 0\right\} =\left(\Spec ST\right)\backslash\left\{ 0\right\} $,
and thus 
\begin{align*}
\Spec\Delta_{d-1}^{-}\big|_{B^{d-1}} & =\Spec\Delta_{d-1}^{-}\backslash\left\{ 0\right\} =\Spec\Delta_{d-2}^{+}\backslash\left\{ 0\right\} =\Spec\Delta_{d-2}^{+}\big|_{B_{d-2}}\\
 & \subseteq\Spec\Delta_{d-2}^{+}\big|_{Z_{d-2}}\subseteq\left[k_{d-2}-\mu_{d-2},k_{d-2}+\mu_{d-2}\right].
\end{align*}
Together with $\Delta_{d-1}^{-}\big|_{Z_{d-1}}=0$ this implies $\left\Vert \mathfrak{D}\right\Vert \leq\mu_{d-2}$,
so that
\[
\left\Vert f_{B}\right\Vert ^{2}=\left\langle \mathbb{P}_{B^{d-1}}f,f\right\rangle \leq\frac{\left|\left\langle \mathfrak{D}f,f\right\rangle \right|+\left|\left\langle \Delta_{d-1}^{-}f,f\right\rangle \right|}{k_{d-2}}\leq\frac{\mu_{d-2}}{k_{d-2}}\left\Vert f\right\Vert ^{2}+\frac{1}{k_{d-2}}\left\Vert \partial f\right\Vert ^{2}.
\]

We note that $\partial f$ is supported on $\left(d-2\right)$-cells
with vertices in distinct blocks of the partition $\left\{ A_{i}\right\} $.
For a sequence of sets $B_{0},\ldots,B_{\ell}$, denote by $X^{j}\left(B_{0},\ldots,B_{\ell}\right)$
the set of \emph{$j$-galleries} in $B_{0},\ldots,B_{\ell}$, namely,
sequences of $j$-cells $\tau_{i}\in X\left(B_{i},\ldots,B_{i+j}\right)$
such that $\tau_{i}$ and $\tau_{i+1}$ intersect in a $\left(j-1\right)$-cell.
To shorten the formulae, we write $A_{\left[d\right]\backslash\{i,j\}}$
for $A_{0},\ldots,\widehat{A_{i}},\ldots,\widehat{A_{j}},\ldots,A_{d}.$We
have:
\begin{gather}
\begin{aligned}\left\Vert \partial f\right\Vert ^{2} & =\sum_{i<j}\:\sum_{\tau\in X\left(A_{\left[d\right]\backslash\{i,j\}}\right)}\negthickspace\negthickspace\negthickspace\negthickspace\left(\partial f\right)\left(\tau\right)^{2}=\sum_{i<j}\sum_{\tau}\left|\left|A_{j}\right|\smash{\sum_{\rho\in A_{i}}}\delta_{\tau\cup\rho\in X}-\left|A_{i}\right|\smash{\sum_{\eta\in A_{i}}}\delta_{\tau\cup\eta\in X}\vphantom{\bigg|}\right|^{2}\\
 & =\sum_{i<j}\left[{\left|A_{j}\right|^{2}\left|X^{d-1}\left(A_{i},A_{\left[d\right]\backslash\{i,j\}},A_{i}\right)\right|-2\left|A_{i}\right|\left|A_{j}\right|\left|X^{d-1}\left(A_{i},A_{\left[d\right]\backslash\{i,j\}},A_{j}\right)\right|\atop +\left|A_{i}\right|^{2}\left|X^{d-1}\left(A_{j},A_{\left[d\right]\backslash\{i,j\}},A_{j}\right)\right|}\right]
\end{aligned}
\label{eq:gallery-counting}
\end{gather}
Proposition 1.6 in \cite{Parzanchevski2013} estimates of the number
of $j$-galleries in $B_{0},\ldots,B_{\ell}$ when each $j+1$ tuple
$B_{i},B_{i+1},\ldots,B_{i+j+1}$ consists of disjoint sets, giving
\begin{equation}
\left|\left|A_{j}\right|^{2}\left|X^{d-1}\left(A_{i},A_{\left[d\right]\backslash\{i,j\}},A_{i}\right)\right|-\frac{\mathcal{K}k_{d-2}}{n^{d}}\left|A_{i}\right|\left|A_{j}\right|\prod_{k=0}^{d}\left|A_{k}\right|\right|\leq c_{d-2,d}\mathcal{K}k_{d-2}\mathcal{E}n^{3},\label{eq:this-eq}
\end{equation}
and similarly for the other summands in \prettyref{eq:gallery-counting}.
Observe that the middle term in \eqref{eq:this-eq} is the same in
all three cases, and thus cancels out in \prettyref{eq:gallery-counting},
giving  $\left\Vert \partial f\right\Vert ^{2}\leq4\binom{d+1}{2}c_{d-2,d}\mathcal{K}k_{d-2}\mathcal{E}n^{3}$.
In total,
\begin{multline*}
\frac{\left|X\left(A_{0},\ldots,A_{d}\right)\right|n^{d}}{\left|A_{0}\right|\ldots\left|A_{d}\right|}\geq\frac{\lambda_{d-1}n^{d-2}}{\left|A_{0}\right|\ldots\left|A_{d}\right|}\left(\left(1-\frac{\mu_{d-2}}{k_{d-2}}\right)\left\Vert f\right\Vert ^{2}-\frac{1}{k_{d-2}}\left\Vert \partial f\right\Vert ^{2}\right)\\
\geq\mathcal{K}\lambda_{d-1}\left(\left(1-\frac{\mu_{d-2}}{k_{d-2}}\right)\left(1-\left(d+1\right)c_{d-1}\mathcal{E}\frac{n^{d+1}}{\prod\left|A_{i}\right|}\right)-4\binom{d+1}{2}c_{d-2,d}\mathcal{E}\frac{n^{d+1}}{\prod\left|A_{i}\right|}\right)\\
\geq\mathcal{K}\lambda_{d-1}\left(1-\frac{\mu_{d-2}}{k_{d-2}}-\left(\left(d+1\right)c_{d-1}+4\binom{d+1}{2}c_{d-2,d}\right)\frac{\mathcal{E}n^{d+1}}{\prod_{i=0}^{d}\left|A_{i}\right|}\right)
\end{multline*}
and the theorem follows.

In the case of $d=2,$ one can work out the constants $c_{d-1}$ and
$c_{d-2,d}$ explicitly, and get the following inequality
\[
\frac{\left|T\left(A_{0},A_{1},A_{2}\right)\right|n^{2}}{\left|A_{0}\right|\left|A_{1}\right|\left|A_{2}\right|}\geq\lambda_{1}\left(k_{0}-\mu_{0}\left(1+\frac{10n^{3}}{9\left|A_{0}\right|\left|A_{1}\right|\left|A_{2}\right|}\right)\right).
\]

This implies Theorem \ref{thm:intro-cheeger-mixing}\eqref{enu:(Isoperimetry)-If-},
since by Theorem \ref{thm:Ram-spec} one has $k_{0}=2(q^{2}+q+1),\:\mu_{0}=6q$
and $\lambda_{1}=(q+1)-2\sqrt{q}$.
\end{proof}

\subsection{\label{sec:pseudo-randomness}Pseudo-randomness}

In this section we use not only the lower bound on the edge spectrum,
but the fact that it is concentrated in two narrow stripes, to show
a pseudo-random behavior of $2$-galleries.
\begin{thm}
\label{thm:mixing-general}Let $X$ be an $n$-vertex tripartite triangle
complex with vertex and edge degrees $k_{0}$ and $k_{1}$, such that
\begin{align*}
\Spec\Delta_{0}^{+}\big|_{Z_{0}} & \subseteq\left[k_{0}-\mu_{0},k_{0}+\mu_{0}\right]\cup\left\{ \tfrac{3k_{0}}{2}\right\} \qquad\mbox{and}\\
\Spec\Delta_{1}^{+}\big|_{Z_{1}} & \subseteq\left[k_{1}-\mu_{1},k_{1}+\mu_{1}\right]\cup\left[2k_{1}-\mu_{1},2k_{1}+\mu_{1}\right]\cup\left\{ 3k_{1}\right\} .
\end{align*}
If $A,B,C,D$ are disjoint sets of vertices of sizes $a,b,c,d$, respectively,
and each of $A\cup D$, $B$ and $C$ is contained in a different
block of the three-partition of $X$ (see Figure \ref{fig:2-gallery}),
then
\begin{equation}
\begin{aligned}\left|\left|X^{2}\left(A,B,C,D\right)\right|-\frac{27k_{0}k_{1}^{2}abcd}{2n^{3}}\right|\leq\frac{6\mu_{0}k_{1}^{2}\sqrt{abcd}}{k_{0}n}\left(\frac{3k_{0}\left(\sqrt{ab}+\sqrt{cd}\right)}{2n}+\mu_{0}\right)\\
+\left[\frac{2k_{1}^{2}\mu_{0}}{k_{0}}+\left(k_{1}+\mu_{1}\right)\mu_{1}\right]\sqrt[4]{abcd}\,\sqrt{\left(\tfrac{3k_{0}\sqrt{ab}}{2n}+\mu_{0}\right)\left(\tfrac{3k_{0}\sqrt{cd}}{2n}+\mu_{0}\right)}.
\end{aligned}
\label{eq:mixing}
\end{equation}
\end{thm}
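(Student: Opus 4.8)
The plan is to realise $\left|F^{2}\left(A,B,C,D\right)\right|$ as a simplicial bilinear form and then estimate it using the concentration of $\Spec\Delta_{1}^{+}$ together with the counting lemmas of \cite{Parzanchevski2013}. Using the three-colouring, fix on every triangle the orientation $\left[u,v,w\right]$ with $u,v,w$ in the three successive colour classes, and (recalling that $B$, $C$ lie in the second and third class) let $\psi_{S}\in\Omega^{2}\left(X\right)$, for $S$ a subset of the first class, be the $2$-form equal to $\one_{S}\!\left(u\right)\one_{B}\!\left(v\right)\one_{C}\!\left(w\right)$ on each positively oriented triangle. Evaluating $\partial_{2}\psi_{S}$ on the three edge-colour types, one finds that on a $V_{2}V_{3}$-edge $\{v,w\}$ it equals $\one_{B}\!\left(v\right)\one_{C}\!\left(w\right)\,n_{S}\!\left(\{v,w\}\right)$, where $n_{S}\!\left(\{v,w\}\right)$ is the number of triangles on $\{v,w\}$ with apex in $S$, while on a $V_{1}V_{2}$- or $V_{1}V_{3}$-edge it is $\one_{S}\!\left(u\right)$ times a count. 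Since $A$ and $D$ are disjoint, the $V_{1}V_{2}$- and $V_{1}V_{3}$-contributions to $\left\langle \partial_{2}\psi_{A},\partial_{2}\psi_{D}\right\rangle$ vanish, and what remains is $\sum_{v\in B,\,w\in C}n_{A}\!\left(\{v,w\}\right)n_{D}\!\left(\{v,w\}\right)=\left|F^{2}\left(A,B,C,D\right)\right|$. Thus
\[
\left|F^{2}\left(A,B,C,D\right)\right|=\left\langle \partial_{2}\psi_{A},\partial_{2}\psi_{D}\right\rangle =\left\langle \Delta_{2}^{-}\psi_{A},\psi_{D}\right\rangle .
\]
For later use, note that $\Spec\Delta_{2}^{-}=\Spec\left(\delta_{2}\partial_{2}\right)$ consists of $0$ (on $Z_{2}$) together with $\Spec\Delta_{1}^{+}\setminus\{0\}\subseteq\left[k_{1}-\mu_{1},k_{1}+\mu_{1}\right]\cup\left[2k_{1}-\mu_{1},2k_{1}+\mu_{1}\right]\cup\{3k_{1}\}$, so $0\le\Delta_{2}^{-}\le 3k_{1}$, and the constant form $\psi_{\bullet}$ (value $1$ on all positively oriented triangles) satisfies $\Delta_{2}^{-}\psi_{\bullet}=3k_{1}\psi_{\bullet}$.

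\textbf{The main term.} Next I would split each indicator into its mean and a fluctuation, $\one_{A}=\tfrac{3a}{n}\one+\one_{A}^{\perp}$ on the first colour class (the hypothesis on $\Spec\Delta_{0}^{+}$ forces the three classes to have equal size $n/3$ and each colour-pair bipartite graph to be $\tfrac{k_{0}}{2}$-biregular with second singular value $O\!\left(\mu_{0}\right)$), and similarly for $B$, $C$, $D$. This writes $\psi_{A}$ as a sum of eight ``tensor pieces'' $\psi_{A}=\sum_{I\subseteq\{1,2,3\}}(\text{product of means})\cdot\phi_{I}$, where $\phi_{I}$ is obtained from $\psi_{\bullet}$ by inserting the fluctuation factors indexed by $I$; likewise for $\psi_{D}$. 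Expanding $\left\langle \Delta_{2}^{-}\psi_{A},\psi_{D}\right\rangle$ over pairs of pieces, the dominant contribution is $I=J=\{2,3\}$, namely $\tfrac{9ad}{n^{2}}\left\Vert \partial_{2}\chi_{BC}\right\Vert^{2}$ with $\chi_{BC}\!\left(\left[u,v,w\right]\right)=\one_{B}^{\perp}\!\left(v\right)\one_{C}^{\perp}\!\left(w\right)$; its $V_{2}V_{3}$-edge part is $k_{1}^{2}\sum_{\{v,w\}}\one_{B}^{\perp}(v)^{2}\one_{C}^{\perp}(w)^{2}$, whose leading term is $k_{1}^{2}\left|E\left(B,C\right)\right|$, and substituting the expander-mixing value $\left|E\left(B,C\right)\right|=\tfrac{3k_{0}bc}{2n}+O\!\left(\mu_{0}\sqrt{bc}\right)$ for the $V_{2}V_{3}$-graph produces $\tfrac{27k_{0}k_{1}^{2}abcd}{2n^{3}}$, the main term of \prettyref{eq:mixing}.

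\textbf{The error terms.} All other pieces are errors. A cross term $\left\langle \Delta_{2}^{-}\phi_{I}^{(A)},\phi_{J}^{(D)}\right\rangle=\left\langle \partial_{2}\phi_{I}^{(A)},\partial_{2}\phi_{J}^{(D)}\right\rangle$ is bounded by Cauchy--Schwarz together with $\left\Vert \partial_{2}\phi\right\Vert^{2}=\left\langle \Delta_{2}^{-}\phi,\phi\right\rangle\le 3k_{1}\left\Vert \phi\right\Vert^{2}$ --- sharpened, for the pieces whose Rayleigh quotient places them predominantly in the $k_{1}$- or $2k_{1}$-cluster of $\Delta_{2}^{-}$, to a bound carrying the factor $\left(k_{1}+\mu_{1}\right)\mu_{1}$ in place of $k_{1}^{2}$, which is precisely where the two-interval hypothesis on $\Spec\Delta_{1}^{+}$ enters. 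The norms $\left\Vert \phi_{I}\right\Vert$ and the residual $V_{1}V_{2}$- and $V_{1}V_{3}$-parts of $\left\Vert \partial_{2}\chi_{BC}\right\Vert^{2}$ expand, after the mean/fluctuation splitting, into partial combinatorial quantities --- edge counts $\left|E\left(\cdot,\cdot\right)\right|$, triangle counts, and $2$-gallery counts through subsets of $A,B,C,D$ --- each of which is estimated by the counting lemmas of \cite{Parzanchevski2013} (the mean terms cancelling as in the proofs of \prettyref{thm:Cheeger} and \prettyref{thm:cheeger-gen-dim}, and $A\cap D=\varnothing$ killing the would-be largest cross terms). Collecting the finitely many bounds, with the factors $\tfrac{3k_{0}\sqrt{ab}}{2n}+\mu_{0}$ and $\tfrac{3k_{0}\sqrt{cd}}{2n}+\mu_{0}$ appearing as $\left|E\left(A,B\right)\right|/\sqrt{ab}$ and $\left|E\left(C,D\right)\right|/\sqrt{cd}$, yields the two bracketed expressions in \prettyref{eq:mixing}.

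\textbf{Main obstacle.} The crux is exactly the two-cluster shape of $\Spec\Delta_{1}^{+}$: since $\Delta_{2}^{-}$ has no single spectral gap (its spectrum accumulates near $k_{1}$, $2k_{1}$ and $3k_{1}$), one cannot invoke a one-cluster mixing lemma as in \cite{Parzanchevski2013}, and $\left\langle \Delta_{2}^{-}\psi_{A},\psi_{D}\right\rangle$ must be pried apart by hand into the tensor pieces above, so that the genuinely $\Theta\!\left(k_{1}\right)$-eigenvalue pieces get isolated and estimated combinatorially while the $\approx 0$ and $\approx 3k_{1}$ pieces are controlled separately. Carrying the error bookkeeping through the gallery-counting lemmas sharply enough to land the explicit constants of \prettyref{thm:Pseudo-random} is then the remaining lengthy but routine computation.
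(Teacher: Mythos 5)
Your opening identity is correct and is a nice alternative to the paper's: defining $\psi_{S}\left(\left[u,v,w\right]\right)=\one_{S}\left(u\right)\one_{B}\left(v\right)\one_{C}\left(w\right)$ and using $A\cap D=\varnothing$ to kill the $V_{1}V_{2}$- and $V_{1}V_{3}$-edge contributions does give $\left|F^{2}\left(A,B,C,D\right)\right|=\left\langle \Delta_{2}^{-}\psi_{A},\psi_{D}\right\rangle $. The paper instead works in $\Omega^{1}$ with the edge adjacency operator $\mathcal{A}=k_{1}I-\Delta_{1}^{+}$ and the indicators $\one_{AB},\one_{CD}$, obtaining $\left|F^{2}\right|=\left\langle p\left(\Delta_{1}^{+}\right)\one_{AB},\one_{CD}\right\rangle $ with $p\left(x\right)=\left(x-k_{1}\right)\left(x-2k_{1}\right)=\left(k_{1}-x\right)^{2}+k_{1}\left(k_{1}-x\right)$, i.e.\ $p\left(\Delta_{1}^{+}\right)=\mathcal{A}^{2}+k_{1}\mathcal{A}$, with the $k_{1}\mathcal{A}$-term dying because $\one_{AB}$ and $\one_{CD}$ have no adjacent edges. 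Both identities count the same galleries.

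Where your proposal breaks down is the spectral step. You claim the dominant contribution to $\left\langle \Delta_{2}^{-}\psi_{A},\psi_{D}\right\rangle$ is the tensor piece $I=J=\left\{ 2,3\right\}$, and that the remaining cross terms are small, with the $\left(k_{1}+\mu_{1}\right)\mu_{1}$ factor appearing for ``pieces whose Rayleigh quotient places them in the $k_{1}$- or $2k_{1}$-cluster.'' Neither assertion holds. Consider the admissible degenerate case $B=V_{1}$, $C=V_{2}$ (so $b=c=n/3$), $A\amalg D=V_{0}$: then $\one_{B}^{\perp}=\one_{C}^{\perp}\equiv0$, so your claimed main piece vanishes identically, yet the theorem's main term $\frac{27k_{0}k_{1}^{2}abcd}{2n^{3}}$ does not. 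More generally, the constant $\psi_{\bullet}$ is the $3k_{1}$-eigenform of $\Delta_{2}^{-}$ (the disorientation), so projecting each indicator onto its mean does \emph{not} isolate the small eigenvalues; in fact $3k_{1}\left\langle \mathbb{P}_{\left\langle\psi_{\bullet}\right\rangle}\psi_{A},\psi_{D}\right\rangle =3k_{1}\frac{\left|T\left(A,B,C\right)\right|\left|T\left(D,B,C\right)\right|}{\left|X^{2}\right|}$ alone is roughly three times the main term, and the true answer arises from a cancellation between this and \emph{negative} contributions from the $k_{1}$- and $2k_{1}$-clusters. Your mean/fluctuation split does not control this cancellation, and there is no reason for the tensor pieces $\phi_{I}$ to lie near any particular cluster of $\Spec\Delta_{2}^{-}$, so the claimed $\left(k_{1}+\mu_{1}\right)\mu_{1}$ improvement has no basis.

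The missing ingredient is exactly the paper's polynomial $p\left(x\right)=\left(x-k_{1}\right)\left(x-2k_{1}\right)$. Its value is that $p$ is \emph{equal} at the two ``trivial'' eigenvalues, $p\left(0\right)=p\left(3k_{1}\right)=2k_{1}^{2}$, so $p\left(\Delta_{1}^{+}\right)$ acts as the \emph{scalar} $2k_{1}^{2}$ on $B^{1}\oplus\left\langle \eta\right\rangle $ -- no cancellation to track -- while $\left\Vert p\left(\Delta_{1}^{+}\right)\big|_{U^{+}}\right\Vert \leq\left(k_{1}+\mu_{1}\right)\mu_{1}$ handles both clusters simultaneously. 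Then the main term is $2k_{1}^{2}\left\langle \mathbb{P}_{B^{1}\oplus\left\langle \eta\right\rangle }\one_{AB},\one_{CD}\right\rangle $, which the modified vertex-Laplacian operator $\mathfrak{D}'=k_{0}\mathbb{P}_{B^{1}}+\frac{k_{0}}{2}\mathbb{P}_{\left\langle \xi,\bar{\xi}\right\rangle }-\Delta_{1}^{-}$ and the colored mixing lemma reduce to $\frac{6k_{1}^{2}E_{AB}E_{CD}}{k_{0}n}+O\left(\frac{\mu_{0}k_{1}^{2}}{k_{0}}\sqrt{E_{AB}E_{CD}}\right)$. You cannot simply apply a polynomial $q\left(\Delta_{2}^{-}\right)$ in your $\Omega^{2}$ picture without destroying the combinatorial identity; the paper's $p$ works because it coincides with $\mathcal{A}^{2}+k_{1}\mathcal{A}$ and the $\mathcal{A}$-term is automatically orthogonal. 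To salvage your route you would need an analogous operator-level trick, not just a pointwise tensor decomposition.
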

\begin{figure}[h]
\captionsetup{format=plain, margin=1.5em, justification=raggedright}
\floatbox[{\capbeside}]{figure} {\caption{A $2$-gallery through $A$, $B$, $C$, $D$ in a tripartite triangle complex.}\label{fig:2-gallery}} {\includegraphics[scale=0.55]{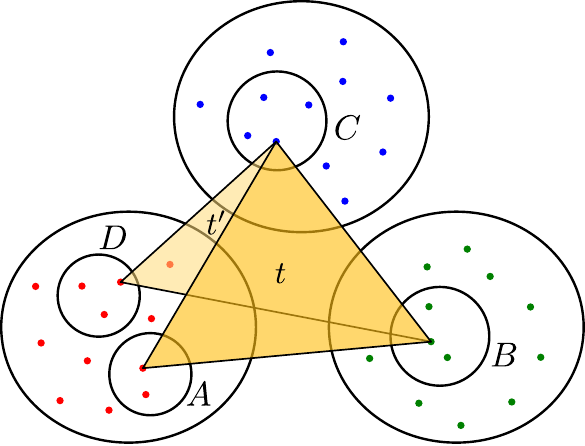}}
\end{figure}

It follows that if $a,b,c,d\leq\vartheta n$ (where $\vartheta\leq\frac{1}{3}$)
then the l.h.s.\ in \prettyref{eq:mixing} is bounded by 
\[
\left|\left|X^{2}\left(A,B,C,D\right)\right|-\frac{27k_{0}k_{1}^{2}abcd}{2n^{3}}\right|\leq\vartheta\left(9\vartheta+\frac{4\mu_{{0}}}{k_{{0}}}\right)\left(k_{{1}}\mu_{{0}}+k_{0}\mu_{{1}}\right)k_{1}n,
\]
and for Ramanujan complexes $k_{0}=2\left(q^{2}+q+1\right)$, $k_{1}=q+1$,
$\mu_{0}=6q$ and $\mu_{1}=2\sqrt{q}$, which gives Theorem \ref{thm:intro-cheeger-mixing}\eqref{enu:(Pseudo-randomness)-If-}.
The main term in \eqref{eq:mixing} agrees with the pseudo-random
expectation: given vertices $\alpha,\beta,\gamma,\delta$ in $A,B,C,D$,
respectively, the probability that $\beta\gamma$ is an edge in $X$
is $\frac{3k_{0}}{2n}$, and the $k_{1}$ triangles which contain
it have their third vertex in the block containing $A\cup D$. The
probability that $\alpha$ and $\delta$ are two of these is $\frac{k_{1}\left(k_{1}-1\right)}{\nicefrac{n}{3}\left(\nicefrac{n}{3}-1\right)}$,
so that $\mathbb{E}\left(\left|X^{2}\left(A,B,C,D\right)\right|\right)=\frac{3k_{0}}{2n}\cdot\frac{k_{1}\left(k_{1}-1\right)}{\nicefrac{n}{3}\left(\nicefrac{n}{3}-1\right)}\cdot abcd\approx\frac{27k_{0}k_{1}^{2}abcd}{2n^{3}}$.

\smallskip{}

We shall need a $c$-partite version of the expander mixing lemma,
where we say that a $k$-regular graph $\left(V,E\right)$ on $n$
vertices is $c$-partite if $V=V_{0}\sqcup\ldots\sqcup V_{c-1}$ with
$\left|V_{i}\right|=\frac{n}{c}$ so that $E\left(V_{i},V_{i}\right)=\varnothing$
and $\left|E\left(v,V_{j}\right)\right|=\frac{k}{c-1}$ for $v\in V_{i}$
and $j\neq i$. The functions $f_{j}\left(V_{\ell}\right)\equiv\smash{\exp\left(\frac{2\pi ij\ell}{c}\right)/\sqrt{n}}$
are orthonormal eigenfunctions of $\Delta_{0}^{+}$ with corresponding
eigenvalues 
\begin{equation}
\lambda_{j}=\begin{cases}
0 & j=0\\
\left(\frac{c}{c-1}\right)k & 0<j<c,
\end{cases}\label{eq:colored-spectrum}
\end{equation}
and we call $\left\{ \lambda_{0},\ldots\lambda_{c-1}\right\} $ the
\emph{partite spectrum}.
\begin{lem}
\label{lem:color-mixing}If the non-partite spectrum of a $c$-partite
$k$-regular graph on $n$ vertices is contained in $\left[k-\mu,k+\mu\right]$,
and $A\subseteq V_{i}$, $B\subseteq V_{j}$ for $i\neq j$, then
\begin{equation}
\left|\left|E\left(A,B\right)\right|-\frac{ck\left|A\right|\left|B\right|}{\left(c-1\right)n}\right|\leq\mu\sqrt{\left|A\right|\left|B\right|}.\label{eq:colored-mixing}
\end{equation}
\end{lem}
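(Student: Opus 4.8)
The plan is to run the classical expander-mixing argument, but peeling off all $c$ ``colored'' eigenvalues of \eqref{eq:colored-spectrum} together with the trivial one. Let $M$ be the adjacency operator of the graph; by $k$-regularity $M = k\cdot\mathrm{Id}-\Delta_0^+$, and since $A$ and $B$ are disjoint we have $\langle\one_A,\one_B\rangle = 0$, so $|E(A,B)| = \langle M\one_A,\one_B\rangle$. Counting edges between $V_p$ and $V_q$ in two ways gives $|V_p| = |V_q|$, so every colour class has size $n/c$ and the functions $f_0,\dots,f_{c-1}$ of \eqref{eq:colored-spectrum} form an orthonormal system; let $U$ be their span. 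Writing $\zeta = e^{2\pi\sqrt{-1}/c}$, so that $f_m(V_\ell) = \zeta^{m\ell}/\sqrt n$, the subspaces $U$ and $U^{\perp}$ are invariant under $M$ (equivalently under $\Delta_0^+$); $M$ acts on $f_0$ by $k$, on each $f_m$ with $0<m<c$ by $k - \tfrac{c}{c-1}k = -\tfrac{k}{c-1}$, and by hypothesis $\|M\big|_{U^{\perp}}\| \le \mu$ (translating $\Spec\Delta_0^+\big|_{U^\perp}\subseteq[k-\mu,k+\mu]$ into the spectrum of $k\cdot\mathrm{Id}-\Delta_0^+$).

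Next I would split $\one_A = \mathbb{P}_U\one_A + g_A$ and $\one_B = \mathbb{P}_U\one_B + g_B$ with $g_A,g_B\in U^{\perp}$; the cross terms vanish by invariance, so $|E(A,B)| = \langle M\mathbb{P}_U\one_A,\mathbb{P}_U\one_B\rangle + \langle Mg_A,g_B\rangle$. Since $A\subseteq V_i$ and $B\subseteq V_j$, one has $\langle\one_A,f_m\rangle = \tfrac{|A|}{\sqrt n}\zeta^{-mi}$ and $\langle\one_B,f_m\rangle = \tfrac{|B|}{\sqrt n}\zeta^{-mj}$, whence
\[
\langle M\mathbb{P}_U\one_A,\mathbb{P}_U\one_B\rangle \;=\; \frac{|A||B|}{n}\Bigl(k \;-\; \frac{k}{c-1}\sum_{m=1}^{c-1}\zeta^{m(j-i)}\Bigr).
\]
Because $i\not\equiv j\pmod c$ we have $\zeta^{j-i}\neq1$ and hence $\sum_{m=0}^{c-1}\zeta^{m(j-i)} = 0$, so the inner sum equals $-1$ and the right-hand side collapses to $\tfrac{ck|A||B|}{(c-1)n}$, which is exactly the main term of \eqref{eq:colored-mixing}.

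For the remaining term, orthogonal projection does not increase norm, so $\|g_A\|\le\|\one_A\| = \sqrt{|A|}$ and likewise for $g_B$, giving $|\langle Mg_A,g_B\rangle| \le \|M\big|_{U^{\perp}}\|\,\|g_A\|\,\|g_B\| \le \mu\sqrt{|A||B|}$. Combining the two displays yields \eqref{eq:colored-mixing}. There is essentially no obstacle here: the proof is the textbook regular-graph mixing lemma applied after quotienting out all $c$ colored directions simultaneously, and the only points needing a little care are the root-of-unity cancellation $\sum_m\zeta^{m(j-i)}=0$ (which is where the hypothesis $i\neq j$ is used) and the bookkeeping of which eigenvalue of \eqref{eq:colored-spectrum} is attached to which $f_m$.
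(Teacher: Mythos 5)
Your proof is correct and follows essentially the same route as the paper's: decompose $\one_A$ and $\one_B$ with respect to the span of $f_0,\dots,f_{c-1}$ and its orthogonal complement, compute the colored contribution explicitly (where the root-of-unity cancellation $\sum_{m=0}^{c-1}\zeta^{m(j-i)}=0$ produces the main term), and bound the remainder by $\mu\sqrt{|A||B|}$ using the spectral hypothesis on the non-colored part. The paper simply normalizes to $A\subseteq V_0$, $B\subseteq V_1$ before computing; your more general bookkeeping and the observation that all $|V_\ell|=n/c$ (ensuring orthonormality of the $f_m$) are both sound and implicit in the paper's argument.
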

\begin{proof}
Assuming that $A\subseteq V_{0}$ and $B\subseteq V_{1}$, and denoting
by $\mathbb{P}_{W}$ the orthogonal projection on $W=\left\langle f_{0},\ldots,f_{c-1}\right\rangle ^{\bot}$,
we have 
\begin{align*}
\left|E\left(A,B\right)\right| & =\left\langle \left(kI-\Delta_{0}^{+}\right)\one_{A},\one_{B}\right\rangle =\sum_{j=0}^{c-1}\left(k-\lambda_{j}\right)\left\langle \one_{A},f_{j}\right\rangle \left\langle \one_{B},f_{j}\right\rangle +\left\langle \left(kI-\Delta_{0}^{+}\right)\mathbb{P}_{W}\one_{A},\one_{B}\right\rangle \\
 & =\frac{\left|A\right|\left|B\right|}{n}\sum_{j=0}^{c-1}\left(k-\lambda_{j}\right)\exp\left(\frac{2\pi ij}{c}\right)+\left\langle \left(kI-\Delta_{0}^{+}\right)\mathbb{P}_{W}\one_{A},\one_{B}\right\rangle ,
\end{align*}
and \prettyref{eq:colored-mixing} follows by \prettyref{eq:colored-spectrum}
and $\left\Vert \left(kI-\Delta_{0}^{+}\right)\big|_{W}\right\Vert \leq\mu$.
\end{proof}

\begin{proof}[Proof of \prettyref{thm:mixing-general}]
Denote by $U^{+}$ the span of the $\Delta_{1}^{+}$-eigenforms with
eigenvalues in $\left[k_{1}-\mu_{1},k_{1}+\mu_{1}\right]\cup\left[2k_{1}-\mu_{1},2k_{1}+\mu_{1}\right]$,
and by $\eta$ a normalized $3k_{1}$-eigenform for $\Delta_{1}^{+}$,
so that $\Omega^{1}\left(X\right)=B^{1}\oplus U^{+}\oplus\left\langle \eta\right\rangle $.
Denoting $p\left(x\right)=\left(x-k_{1}\right)\left(x-2k_{1}\right)$,
$p\left(\Delta_{1}^{+}\right)$ acts on $B^{1}\oplus\left\langle \eta\right\rangle $
as the scalar $2k_{1}^{2}$, and 
\[
\left\Vert p\left(\Delta_{1}^{+}\right)\big|_{U^{+}}\right\Vert \leq\max\left\{ \left|p\left(\lambda\right)\right|\,\middle|\,\lambda\in\begin{smallmatrix}\left[k_{1}-\mu_{1},k_{1}+\mu_{1}\right]\,\cup\,\\
\left[2k_{1}-\mu_{1},2k_{1}+\mu_{1}\right]
\end{smallmatrix}\right\} =\left(k_{1}+\mu_{1}\right)\mu_{1}.
\]
Say that two directed edges are \emph{neighbors }if they have a common
origin or a common terminus, and their union (as a cell) is in $X^{2}$.
We denote this by $e\sim e'$, and define $\mathcal{A}:\Omega^{1}\left(X\right)\rightarrow\Omega^{1}\left(X\right)$
by $\left(\mathcal{A}f\right)\left(e\right)=\sum_{e'\sim e}f\left(e'\right)$.
The upper Laplacian satisfies $\Delta_{1}^{+}=k_{1}\cdot I-\mathcal{A}$
(see \cite{Parzanchevski2013}), and it follows that $p\left(\Delta_{1}^{+}\right)=\mathcal{A}^{2}+k_{1}\mathcal{A}$.
Define $\one_{AB}\in\Omega^{1}\left(X\right)$ by 
\[
\one_{AB}\left(vw\right)=\begin{cases}
1 & v\in A,w\in B\\
-1 & v\in B,w\in A\\
0 & otherwise
\end{cases}
\]
and similarly $\one_{CD}$. We claim that 
\begin{equation}
\left|X^{2}\left(A,B,C,D\right)\right|=\left\langle p\left(\Delta_{1}^{+}\right)\one_{AB},\one_{CD}\right\rangle .\label{eq:two-galleries}
\end{equation}
Indeed, edges in $E\left(A,B\right)$ have no neighbors in $E\left(C,D\right)$,
so that $\left\langle \mathcal{A}\one_{AB},\one_{CD}\right\rangle =0$,
and $\mathcal{A}$ is self-adjoint (since $\Delta_{1}^{+}$ is), giving
\begin{gather}
\left\langle p\left(\Delta_{1}^{+}\right)\one_{AB},\one_{CD}\right\rangle =\left\langle \mathcal{A}\one_{AB},\mathcal{A}\one_{CD}\right\rangle =\sum_{{e,e',e''\in X^{1}\atop e'\sim e\sim e''}}\one_{AB}\left(e'\right)\one_{CD}\left(e''\right).\label{eq:sum_1AB_1CD}
\end{gather}
The nonzero terms in this sum come from edges $e$ which have neighbors
$e'\in E\left(A,B\right)$ and $e''\in E\left(C,D\right)$, and it
follows that $e\in E\left(B,C\right)$. Thus, $\left(e'\cup e,e\cup e''\right)$
is a $2$-gallery in $X^{2}\left(A,B,C,D\right)$, and observing that
$\one_{AB}\left(e'\right)=\one_{CD}\left(e''\right)\left(=\pm1\right)$
it contributes one to \prettyref{eq:sum_1AB_1CD}. On the other hand,
for every gallery $\left(t,t'\right)\in X^{2}\left(A,B,C,D\right)$,
the edges $e'=t\backslash C$, $e=t\cap t'$, $e''=t\backslash B$
form such a triplet, and we obtain \prettyref{eq:two-galleries}.
On the spectral side, 
\[
\left\langle p\left(\Delta_{1}^{+}\right)\one_{AB},\one_{CD}\right\rangle =2k_{1}^{2}\left\langle \mathbb{P}_{B^{1}\oplus\left\langle \eta\right\rangle }\one_{AB},\one_{CD}\right\rangle +\left\langle p\left(\Delta_{+}^{1}\right)\mathbb{P}_{U^{+}}\one_{AB},\one_{CD}\right\rangle ,
\]
and the last term is bounded by 
\begin{equation}
\left\Vert p\left(\Delta_{+}^{1}\right)\big|_{U^{+}}\right\Vert \left\Vert \one_{AB}\right\Vert \left\Vert \one_{CD}\right\Vert \leq\left(k_{1}+\mu_{1}\right)\mu_{1}\sqrt{E_{AB}E_{CD}},\label{eq:mu1-error}
\end{equation}
where $E_{ST}:=\left|E\left(S,T\right)\right|$. As $\eta$ has constant
sign on $V_{0}\rightarrow V_{1}\rightarrow V_{2}\rightarrow V_{0}$,
\begin{equation}
2k_{1}^{2}\left\langle \mathbb{P}_{\left\langle \eta\right\rangle }\one_{AB},\one_{CD}\right\rangle =2k_{1}^{2}\left\langle \one_{AB},\eta\right\rangle \left\langle \eta,\one_{CD}\right\rangle =\frac{4k_{1}^{2}E_{AB}E_{CD}}{k_{0}n},\label{eq:disor-contrib}
\end{equation}
and we are left to analyze $\mathbb{P}_{B^{1}}\one_{AB}$. As in the
non-tripartite case, one has $\Spec\Delta_{-}^{1}\big|_{B^{1}}=\Spec\Delta_{+}^{0}\big|_{B_{0}}$,
but now the latter comprises not only eigenvalues in $\left[k_{0}-\mu_{0},k_{0}+\mu_{0}\right]$,
but also $\frac{3k_{0}}{2}$ (twice, see \prettyref{thm:ram-triangle-spec}).
If $\omega=\exp\left(\tfrac{2\pi i}{3}\right)$ and 
\[
\xi\left(vw\right)=\left\{ \begin{aligned}\sqrt{\nicefrac{2}{k_{0}n}} &  & v\in V_{0},w\in V_{1} &  &  &  &  & -\sqrt{\nicefrac{2}{k_{0}n}} &  & w\in V_{0},v\in V_{1}\\
\omega\sqrt{\nicefrac{2}{k_{0}n}} &  & v\in V_{1},w\in V_{2} &  &  &  &  & -\omega\sqrt{\nicefrac{2}{k_{0}n}} &  & w\in V_{1},v\in V_{2}\\
\overline{\omega}\sqrt{\nicefrac{2}{k_{0}n}} &  & v\in V_{2},w\in V_{0} &  &  &  &  & -\overline{\omega}\sqrt{\nicefrac{2}{k_{0}n}} &  & w\in V_{2},v\in V_{0},
\end{aligned}
\right.
\]
then $\left\{ \xi,\overline{\xi}\right\} $ is an orthonormal basis
for the $\frac{3k_{0}}{2}$-eigenspace of $\Delta_{1}^{-}$. Denote
by $U^{-}$ the space spanned by the $\Delta_{1}^{-}$-eigenforms
with eigenvalue in $\left[k_{0}-\mu_{0},k_{0}+\mu_{0}\right]$. By
the action of each summand in 
\[
\mathfrak{D}':=k_{0}\mathbb{P}_{B^{1}}+\frac{k_{0}}{2}\mathbb{P}_{\left\langle \xi,\overline{\xi}\right\rangle }-\Delta_{1}^{-}
\]
on each of the terms in the orthogonal decomposition $\Omega^{1}\left(X\right)=Z_{1}\oplus U^{-}\oplus\left\langle \xi,\overline{\xi}\right\rangle $
we see that $\left\Vert \mathfrak{D}'\right\Vert \leq\mu_{0}$. Due
to the fact that $\partial_{1}\one_{AB}$ and $\partial_{1}\one_{CD}$
are supported on different vertices, $\left\langle \Delta_{1}^{-}\one_{AB},\one_{CD}\right\rangle $
vanishes, and together with
\[
\left\langle \mathbb{P}_{\left\langle \xi,\overline{\xi}\right\rangle }\one_{AB},\one_{CD}\right\rangle =2\Re\left(\left\langle \one_{AB},\xi\right\rangle \left\langle \xi,\one_{CD}\right\rangle \right)=-\frac{2E_{AB}E_{CD}}{k_{0}n}
\]
(and $\mathbb{P}_{B^{1}}=\frac{1}{k_{0}}\Delta_{1}^{-}-\frac{1}{2}\mathbb{P}_{\left\langle \xi,\overline{\xi}\right\rangle }+\frac{1}{k_{0}}\mathfrak{D}'$)
this gives 
\[
\left|2k_{1}^{2}\left\langle \mathbb{P}_{B^{1}}\one_{AB},\one_{CD}\right\rangle -\frac{2k_{1}^{2}E_{AB}E_{CD}}{k_{0}n}\right|\leq\frac{2k_{1}^{2}}{k_{0}}\left\langle \mathfrak{D}'\one_{AB},\one_{CD}\right\rangle \leq\frac{2\mu_{0}k_{1}^{2}\sqrt{E_{AB}E_{CD}}}{k_{0}}.
\]
Combining this with \prettyref{eq:mu1-error} and \prettyref{eq:disor-contrib}
we conclude that 
\[
\left|\left|X^{2}\left(A,B,C,D\right)\right|-\frac{6k_{1}^{2}E_{AB}E_{CD}}{k_{0}n}\right|\leq\left(\frac{2\mu_{0}k_{1}^{2}}{k_{0}}+\left(k_{1}+\mu_{1}\right)\mu_{1}\right)\sqrt{E_{AB}E_{CD}}.
\]
This estimates $\left|X^{2}\left(A,B,C,D\right)\right|$ in terms
of $E_{AB}$ and $E_{CD}$. To have an estimate in terms of $a,b,c$
and $d$ we use \prettyref{lem:color-mixing}, which gives $\left|E_{AB}-\frac{3k_{0}ab}{2n}\right|\leq\mu_{0}\sqrt{ab}$
and similarly for $E_{CD}$, and the theorem follows.
\end{proof}
Various applications of a triangle mixing lemma can be adjusted to
use our gallery mixing lemma. We demonstrate this below for chromatic
numbers, and other examples are Gromov's overlap property, along the
lines of \cite{FGL+11,Parzanchevski2013}, and the crossing numbers
of complexes, as discussed in \cite[§8.1]{Gundert2013}. Nevertheless,
the question of triangle pseudorandomness remains interesting, and
should give better results if it does hold. The fact that most of
the spectrum is concentrated in the strip $\mathcal{I}$ (see \prettyref{thm:ram-triangle-spec})
gives hope that this can be done by analyzing the combinatorics of
eigenforms which occur in the principal series (type \prettyref{enu:prin-unit}
in \prettyref{sec:Unitary-Iwahori-spherical-repres}), and showing
that their contribution is negligible. 

\subsection{Chromatic Number}

As an application of Theorem \ref{thm:mixing-general} we prove Theorem
\ref{thm:intro-cheeger-mixing}\eqref{enu:chrom}, which bounds the
chromatic number of non-tripartite Ramanujan complexes.
\begin{proof}[Proof of Theorem \ref{thm:intro-cheeger-mixing}(3)]
Write $X=\Gamma\backslash\mathcal{B}_{3}$ and let $\widehat{\Gamma}=\Gamma\cap\ker\tau$.
This is a normal subgroup of $\Gamma$ of index three, and $\widehat{X}:=\widehat{\Gamma}\backslash\mathcal{B}_{3}\overset{\pi}{\twoheadrightarrow}X$
is a tripartite three-cover. If the chromatic number of $X$ is $\chi$,
we can find a set $N\subseteq X^{0}$ of size $\frac{n}{\chi}$ with
$T\left(N,N,N\right)=\varnothing$. Let $N_{i}=\left\{ v\in\widehat{X}^{0}\,\middle|\,\pi\left(v\right)\in N,\tau\left(v\right)=i\right\} $,
and take $A=N_{0}$, $B=N_{1}$, $C=N_{2}$ and $D\subseteq\left\{ v\in\widehat{X}^{0}\,\middle|\,\tau\left(v\right)=0\right\} \backslash N_{0}$
such that $|D|=\frac{n}{2}$. Since the set $T\left(N,N,N\right)$
is empty, $X^{2}\left(A,B,C,D\right)$ is empty as well. Therefore
the l.h.s.\ of \prettyref{eq:mixing} reads $\frac{q^{4}n}{2\chi^{3}}=\frac{q^{4}abcd}{n^{3}}\leq\frac{27k_{0}k_{1}^{2}abcd}{2\left(3n\right)^{3}}$.
Assume to the contrary that $\chi<\frac{\sqrt[3]{q}}{5}$. Then the
r.h.s.\ of \prettyref{eq:mixing} is bounded by
\[
\frac{nq^{3.5}}{\chi^{1.5}\sqrt{2}}\left(2+\frac{1}{125}\left(264+255\sqrt{2}\right)\right)<\frac{nq^{3.5}}{\chi^{1.5}}\cdot\frac{7}{\sqrt{2}},
\]
so that \prettyref{thm:mixing-general} implies $\sqrt{q}<7\sqrt{2}\chi^{1.5}$,
which contradicts the assumption.
\end{proof}

\bibliographystyle{amsalpha}
\bibliography{mybib}

\bigskip{}

\parbox[t]{0.55\columnwidth}{%
\noun{Einstein Institute of Mathematics}

\noun{The Hebrew University of Jerusalem }

\noun{Jerusalem, 91904, Israel}

E-mail: \texttt{kost.golubev@mail.huji.ac.il}%
}%
\parbox[t]{0.44\columnwidth}{%
\noun{School of Mathematics}

\noun{Institute for Advanced Study}

\noun{Princeton, NJ 08540, USA}

E-mail: \texttt{parzan@ias.edu}%
}
\end{document}